\DeclareSymbolFont{cyrletters}{OT2}{wncyr}{m}{n}
\DeclareMathSymbol{\Sha}{\mathalpha}{cyrletters}{"58}
\title[On flat even deformation rings]{On flat even deformation rings}
\date{\today}
\author{
 Peter Vang Uttenthal \orcidlink{0009-0001-0878-8213}
 }
\email{petervang@math.au.dk}
\address{Department of Mathematics, Aarhus Universitet, Ny Munkegade 118, 1530-421, DK-8000
Aarhus C, Denmark}
\newcommand{\Gal}{\operatorname{Gal}}
\newcommand{\Q}{\mathbb{Q}}
\newcommand{\Z}{\mathbb{Z}}
\newcommand{\F}{\mathbb{F}}
\newcommand{\W}{\mathbb{W}(\mathbb{F}_{p^f})}
\newcommand{\Ad}{\operatorname{Ad}^0(\overline{\rho})}
\newtheorem{theorem}{Theorem}
\newtheorem{lemma}[theorem]{Lemma}
\newtheorem{definition}[theorem]{Definition}
\newtheorem{proposition}[theorem]{Proposition}
\newtheorem{corollary}[theorem]{Corollary}
\newtheorem{remark}[theorem]{Remark}
\begin{document}

\begin{abstract}

In the presence of a nontrivial dual Selmer group, certain global even deformation rings are shown to be finite and flat over $\mathbb{Z}_p$. 
Previously, flatness was only known in established cases of Langlands reciprocity in the odd parity. 
By techniques from global class field theory, explicit examples of even representations are computed to which the results apply. 
For even representations $\overline{\rho}$ in an explicit family, it is observed that if Leopoldt's conjecture is true for a certain number field attached to $\overline{\rho}$, then the global even deformation ring is flat at the minimal level.  
\end{abstract}


\maketitle

\tableofcontents

\section{Introduction}

On the arithmetic side of the Langlands program, the cohomological Selmer and dual Selmer groups 
have been essential objects of study since the work of 
A. Wiles and R. Taylor on the Taniyama-Shimura-Weil conjecture (\cite{Wiles}, \cite{Taylor-Wiles}). 
The Taylor-Wiles method resulted in significant progress on Langlands reciprocity by proving that motivic $L$-functions attached to semistable elliptic curves correspond to automorphic $L$-functions.
Elliptic curves and automophic representations are connected via 2-dimensional Galois representations $\rho$ that are necessarily odd, i.e. complex conjugation $c$ has $\det \rho (c) = -1$. In \cite{Wiles}, the global deformation ring $R$ attached to the odd representation $\rho$ is shown to be isomorphic to a Hecke algebra $T$, which implies that $R$ is finite and flat over the $p$-adic integers $\Z_p$.
For even Galois representations, in the presence of a nontrivial Selmer and dual Selmer group, 
it is not known whether their global deformation rings are finite and flat over $\Z_p$. 
In this paper, 
we construct an infinite family of global even deformation rings $(R_d)_{d\geqslant 2}$ with 
$$
R_d \simeq \Z_p [[T]]/(T^d + a_{d-1}T^{d-1}+\cdots +a_0)
$$
that we prove are finite and flat over $\Z_p$ in global settings where the Selmer and dual Selmer groups are nontrivial. 

To state the results precisely, we introduce some language from the Galois cohomological approach to deformation theory, cf. \cite{artinII} \cite{SFM}. 
Let $\overline{\Q}$ be an algebraic closure of the field $\Q$ of rational numbers, let $\F_p$ be a finite field of cardinality $p$, and let
$$
\overline{\rho}: \Gal(\overline{\Q}) \to \operatorname{GL}(2,\F_p)
$$
be an irreducible residual representation. 
Let $S$ be a finite set of places in $\Q$ containing $p$ and the infinite place such that $\overline{\rho}$ is unramified outside $S$. 
For each place $q$ in $\Q$, let $R_q$ be the local deformation ring at $q$, and let 
$\mathcal{C}_q$ be the set of deformations of $\overline{\rho}|_{\Gal(\overline{\Q}_q/\Q_q)}$ that factor through a fixed smooth quotient 
\[
\begin{tikzcd}
\phi_q: R_q \ar[r, twoheadrightarrow]& \Z_p[[T_1,\ldots,T_{n_q}]]
\end{tikzcd}
\]
for some integer $n_q \geqslant 1$. 
Let $\mathcal{N}_q$ be the subspace of $H^1(\Gal(\overline{\Q}_q/\Q_q),\Ad)$ 
obtained from the monomorphism induced by $\phi_q$ on cotangent spaces. 
One may verify that the pair $(\mathcal{C}_q, \mathcal{N}_q)$
satisfies the properties P1-P7 in \cite[Section 1, p. 551]{artinII}. The collection of subspaces $\mathcal{N} =(\mathcal{N}_q)_{q\in S}$ is called a Selmer condition. 
Let $R_{\mathcal{N}}$ be the global deformation ring parameterizing deformations $\rho$ of $\overline{\rho}$
 with the property that $\rho|_{\Gal(\overline{\Q}_q/\Q_q)} \in \mathcal{C}_q$ for all $q \in S$. The tangent space of $R_\mathcal{N}$ is isomorphic to the Selmer group 
 $H^1_{\mathcal{N}}(\Gal(\Q_S/\Q), \Ad)$
 consisting of all 
 $f\in H^1_{\mathcal{N}}(\Gal(\Q_S/\Q), \Ad)$
 such that $f|_{\Gal(\overline{\Q}_q/\Q_q)} \in \mathcal{N}_q$ for all $q\in S$.

Suppose $\overline{\rho}$ is odd in the sense that $\det \overline{\rho}(c)=-1$ for a complex conjugation $c$. 
In the language of the Galois cohomological approach to deformation theory \cite{artinII} \cite{SFM}, the work of Wiles and Taylor \cite{Wiles} \cite{Taylor-Wiles} has the following consequence:  There exist pairs $(\mathcal{C}_q, \mathcal{N}_q)$ for all $q\in S$ and a Hecke algebra $T$ such that  
$R_{\mathcal{N}} \simeq T$.
In particular, $R_\mathcal{N}$ is finite and flat over $\Z_p$.

If $\overline{\rho}$ is even ($\det \overline{\rho} (c)=1$ for a complex conjugation $c$), then it is not known whether global deformation rings $R_\mathcal{N} = R_{(\mathcal{N}_{q})_{q\in S}}$ 
(unramified away from $S$) are flat over $\Z_p$. It is only known in some special cases, such as when the Selmer group is trivial and $R_{\mathcal{N}} \simeq \Z_p$. 
The purpose of this paper is to construct 
an infinite family of global even deformation rings $(R_d)_{d\geqslant 2}$ of the form  
$$
R_d \simeq \Z_p[[T]]/(T^d+a_{d-1}T^{d-1}+\cdots+a_0)
$$
that are finite and flat over $\Z_p$ even in the presence of a nontrivial Selmer group. 

Let $\Ad^*$ be the dual of $\Ad$,  and let $\mathcal{N}_q^\perp \subseteq H^1(\Gal(\overline{\Q}_q/\Q_q), \Ad^*)$ be the annihilator of $\mathcal{N}_q$.
The dual Selmer group $H^1_{\mathcal{N}^\perp}(\Gal(\Q_S/\Q), \Ad^*)$ is the group of
$\varphi \in H^1(\Gal(\Q_S/\Q), \Ad^*)$ such that $\varphi_{\Gal(\overline{\Q}_q/\Q_q)} \in \mathcal{N}_q^\perp$. 
We say that the global setting is balanced if the Selmer and dual Selmer groups have the same rank (i.e. the same dimension over $\F_p$). 

Let $\overline{\rho}$ be of any parity. 
In \cite{SFM}, it was shown that there is a finite set of primes $Q$ such that after allowing ramification at the primes in $Q$, then 
$$
H^1_{\mathcal{N}}(\Gal(\Q_{S\cup Q}/\Q), \Ad) =
H^1_{\mathcal{N}^\perp}(\Gal(\Q_{S\cup Q}/\Q), \Ad^*) = 0,
$$
from which it follows that
$R_{(\mathcal{N}_q)_{q\in S\cup Q}} \simeq \Z_p$.

In this paper, we will study the situation of an even residual representation in a balanced global setting of rank one (if a global setting is balanced of some positive rank $n$, it can be arranged to be balanced of rank one after allowing ramification at a finite set of primes.) 
In this case, the Weierstrass preparation theorem implies that 
$$
R_\mathcal{N} \simeq \Z_p[[T]]/ p^\mu h
$$
for an integer $\mu$ called the $\mu$-invariant of the ring $R_\mathcal{N}$
and a distinguished polynomial $h(T)$ (recall that a polynomial $h(T) \in \Z_p[T]$ is distinguished if it is monic and the coefficients on the nonleading terms are in the maximal ideal $p\Z_p$).

Suppose $\overline{\rho}: \Gal(\Q_S/\Q) \to \operatorname{GL}(2,\F_p)$ is an irreducible even representation in a balanced global setting of rank one. 
By the methods in \cite{SFM}, there exists a prime $v$ such that after allowing ramification at $v$, the global setting is balanced of rank zero, and 
$$
R_{(\mathcal{N}_q)_{q\in S\cup \{v \}}} \simeq \Z_p.
$$
In particular, the mod $p$ ring has 
$$
\dim R_{(\mathcal{N}_q)_{q\in S\cup \{v \}}}/pR_{(\mathcal{N}_q)_{q\in S\cup \{v \}}} =1.
$$  
\begin{theorem}\label{petervang}
Let $S$ be a finite set of places in $\Q$ containing the prime $p$ and the infinite place.
Let $\Q_S$ be the maximal extension of $\Q$ unramified outside $S$ and consider an irreducible even representation
$$
\overline{\rho}: \Gal(\Q_S/\Q) \to \operatorname{GL}(2,\F_p).
$$
Suppose the global setting is balanced of rank one, in the sense that the Selmer and dual Selmer groups are of equal rank one: 
$$
\dim H^1_{\mathcal{N}}( \Gal(\Q_S,\Q) , \Ad) =
\dim H^1_{\mathcal{N}^\perp}( \Gal(\Q_S,\Q) , \Ad^*) = 1.
$$
Let $d\geqslant 2$ be any integer. 
There exists an infinite set $C^{(p)}_d$ of primes such after after allowing ramification at one additonal prime $v \in C^{(p)}_d$, 
the global setting remains balanced of rank one, 
the global even deformation ring is finite and flat over $\Z_p$ and has the explicit structure 
$$
R_{(\mathcal{N}_q)_{q\in S\cup \{v \} }} \simeq \Z_p[[T]]/(h(T))
$$
where $$h(T)
= T^d + a_{d-1}T^{d-1} + \cdots a_0, \quad a_i \in p \Z_p
$$ is a distinguished polynomial of degree $d$.
In particular, the $\mu$-invariant of the ring $R_\mathcal{N} =R_{(\mathcal{N}_q)_{q\in S\cup \{v \} }}$ is trivial:
$$
\mu(R_{(\mathcal{N}_q)_{q\in S\cup \{v \} }}) = 0.
$$
and the mod $p$ ring is $d$-dimensional over $\F_p$: 
$$
\dim R_\mathcal{N}/pR_\mathcal{N} = d. 
$$
\end{theorem}

\begin{corollary} \label{rankone}
Let $\overline{\rho}$ be an even representation in a balanced global setting of rank one as in Theorem \ref{petervang}.
After allowing ramification at one auxiliary prime in such a way that the global setting remains balanced of rank one, there exists a finite ring extension $\mathcal{O}$ of $\Z_p$
and an even representation 
$$
\rho: \Gal(\Q_{S\cup \{v\}} /\Q) \to \operatorname{GL}(2,\mathcal{O})
$$
with $\rho \equiv \overline{\rho} \mod p$. 
\end{corollary}

\begin{remark}[concerning Theorem \ref{petervang}]
If the global setting is balanced of rank zero at the minimal level, 
the problem of finding auxiliary primes $v$ for which the global setting remains balanced of rank zero after allowing ramification at $v$ is already hard \cite{even2}, and showing that there are infinitely many such primes $v$ is generally an open problem. 
In a forthcoming paper, we study level raising of even $p$-adic representations in balanced global settings of rank zero,
and provide numerical data suggesting that the set of auxiliary primes $v$ for which the 
global setting remains balanced of rank zero after allowing ramification at $v$
has a positive density of \mbox{$(p-1)/p$}.
\end{remark}

\subsection{Related work}
In the Taylor-Wiles method, it is crucial that the representations are of 
\emph{odd} parity, since a Langlands correspondence is established 
between geometric Galois representations attached to semistable elliptic curves and holomorphic modular forms, and these geometric Galois representations are necessarily odd. 
The holomorphic modular forms are parametrized by a Hecke algebra, $T$, 
which is known to be a finite, flat complete intersection, and 
the strategy in \cite{Wiles} is to identify global odd deformation rings $R$ with such Hecke algebras $T$. To show that $R \simeq T,$
it is necessary to bound the rank of the Selmer group at the minimal level (cf. \cite{Wiles} part (i) of Theorem 3.1, p. 518). 
To prove such bounds, the key idea was to \emph{annihilate} the dual Selmer group by allowing ramification at auxiliary primes $q \equiv 1 \mod p$, now called Taylor-Wiles primes (cf. \cite{Wiles} p. 521). This idea was then combined with Wiles' formula (\cite{Wiles} Proposition 1.6) to prove the desired bound on the rank of the Selmer group at the minimal level. 

On the other hand, even Galois representations are not known to come from geometry, 
and they do not correspond to holomorphic automorphic representations, 
so the Taylor-Wiles method is not available to study them.
Langlands reciprocity still contains the even parity as such representations conjecturally correspond to classical Mass wave forms. As F. Calegari explains in \cite{Calegari}, the difficulty is that the Maass forms are incredibly hard to access. This phenomenon accounts for the fact that, compared to the many established cases of odd representations as Langlands parameters,
we have seen almost no progress on reciprocity in the even case. 
Compared to the Taylor-Wiles method, the Galois cohomological method (\cite{SFM}, \cite{Patrikis1}) provides an alternative lifting technique founded purely on
Galois cohomology, which makes it a suitable method for 
parity-free global settings. The method has been significantly extended by N. Fakhruddin, C. Khare and S. Patrikis in \cite{FKP1} and \cite{FKP}.

In the Galois cohomological method, the idea is to create a global setting which is balanced, in the sense that the Selmer and dual Selmer groups are of equal rank. Once this has been achieved, Taylor and Wiles' idea of modifying the rank of the dual Selmer group with auxiliary primes is brought in. 
The auxiliary primes $q$ should have a balanced local deformation theory in such a way that the global setting remains balanced after allowing ramification at $q$.
If the auxiliary primes are chosen to lower the Selmer rank while keeping the global setting balanced, then the dual Selmer rank is annihilated with a finite number of auxiliary primes. At this point, the global setting is balanced of rank zero, and when this happens, it was observed in \cite{even1} (and generalized significantly in \cite{lifting}, \cite{SFM}, \cite{artinII}) that there is an inductive method of infinitesimally deforming the representation to characteristic zero. 
Since the method proceeds by creating a balanced global setting of rank zero, an independent motivation for this work was to loosen this criteria and prove existence of characteristic zero lifts in balanced global settings of positive rank. In Theorem \ref{rankone}, we achieve this in balanced global settings of rank one. We expect that it is possible to generalize the result from rank one to balanced global settings of arbitrary rank, but we are still searching for a way to move beyond rank two. 

In \cite{KRtorsion} concerning torsion Galois representations, 
certain applications of Wiles' formula in the proof of Theorem 2 (\cite{KRtorsion} p. 28-30) turned out to extend cleanly to the parity-free setting of this paper, although the authors had a different focus than us. A ring of integers $\mathcal{O}$ in a finite extension of $\Q_p$ with uniformizer $\pi$ was fixed, and then odd, irreducible representations with coefficients in the torsion module $\mathcal{O}/(\pi^n)$ were studied. The authors proved that by allowing ramification at a finite set of auxiliary primes, the ring $\mathcal{O}$ 
can be realized as the global odd deformation ring parametrizing newforms of weight $k=2$. 
The work deals exclusively with odd representations in residual characteristic 
$p\geq 5$, where they work with the type of auxiliary primes called \emph{nice} primes (see Definition \ref{nice} below). 
In this paper, we allow $p \geq 3$. For $p=3$, where nice primes are not defined, we work with a different type of auxiliary primes whose local deformation theory comes with cohomology groups of exactly the ranks we need. Our setting is parity free, in the sense that we do not need to assume either oddness or evenness of the representations in our proofs.

\subsection{Summary}
The contents of the following sections are briefly summarized.
In Theorem \ref{rankone}, certain global even deformation rings in characteristic $p\geqslant 3$ are shown to be finite and flat over $\mathbb{Z}_p$ even without annihilating their dual Selmer groups. 
Explicit examples of even representations are given to which Theorem \ref{rankone} applies. Either the associated global even deformation rings are flat at the outset, or we can allow ramification at a single auxiliary prime to make the new part flat. In either case, the global setting remains balanced with a nontrivial dual Selmer group, and despite this obstruction, we prove existence of even characteristic zero lifts. 
For even representations $\overline{\rho}$ in an explicit family, we show that there is a number field $K_1$ attached to $\overline{\rho}$ such that if Leopoldt's conjecture holds for $K_1$, then the global even deformation ring is flat at the minimal level.  

\subsection{Acknowledgments}
I am grateful to Ravi Ramakrishna and Gebhard B\"{o}ckle for sharing their ideas, to the referee for their many valuable suggestions, and to Paul Nelson for helpful conversations. This work was supported by the grant VIL54509 from Villum Fonden.

\section{The Galois cohomological method for reductive groups}
The purpose of this section is to formulate the notions of the introduction in precise terms, and to present some background for the convenience of the reader. Since its inception \cite{SFM}, 
Patrikis \cite{Patrikis1} and Fakhruddin-Khare-Patrikis \cite{FKP1} have extended the Galois cohomological method extensively to include
Galois representations
$$
\overline{\rho}: \Gal(\overline{\Q}/\Q) \to G(\overline{\F}_p)
$$
valued in general reductive groups $G$, and the currect section is written in the modern language. 
The extension of the method is crucial for  applications in the Langlands program, whose purpose is to relate harmonic analysis on various reductive groups to arithmetic. 
In later sections, we will specialize to $G=\operatorname{GL}(2)$ or 
$G=\operatorname{SL}(2).$ In future work, we hope to extend Theorem $\ref{rankone}$ to general reductive groups. 

For a prime number $p$, let $S$ be a finite set of places $v$ in $\Q$ containing the wild place $v=p$ and the archimedian place $v=\infty$, and let $\Q_S$ be the maximal extension of $\Q$ unramified outside $S$. By requiring $S$ to contain $\infty$, we make our setting conducive to applications of Wiles' formula, which rests on global Tate duality and the global Euler characteristic formula. 
Let $\overline{\Z}_p$ be the integral closure of ${\Z}_p$ in $\overline{\Q}_p$.
Let $G$ be a smooth group scheme over $\overline{\Z}_p$ for which the irreducible component containing the identity, $G^0$, is a split connected reductive group, and consider a residual representation
$$\overline{\rho}: \Gal(\Q_S/\Q) \to G(\overline{\F_p}).$$ The theory seeks, as a basic goal, to prove existence of representations $\rho$ to charactistic zero such that the diagram
\[ 
\begin{tikzcd}
    & G(\overline{\Z}_p) \arrow[d]\\ 
  \Gal(\Q_S/\Q) \arrow[r, "\overline{\rho}"]   \arrow[ur, dotted, "\rho" description]  & G(\overline{\F}_p)
\end{tikzcd}
\]
commutes (\cite{FKP1}, p. 3506). Mazur had the idea of regarding this problem from the point of view of geometry. In \cite{Mazur1}, he proved that there is a type of moduli space, $X^{\operatorname{ver}}= \operatorname{Spec} R^{\operatorname{ver}}$, which is a $\Q_p$-analytic manifold parameterizing all lifts
of $\overline{\rho}$ up to strict equivalence. From this point of view, characteristic zero lifts correspond to $\overline{\Z}_p$-valued points of the space $X^{\operatorname{ver}}$.  
In Mazur's theory,
the tangent space can be interpreted as the degree one cohomology group 
$H^1(\Gal(\Q_S/\Q), \operatorname{Ad}(\overline{\rho}))$, where $\operatorname{Ad}(\overline{\rho})$ is the $\F_p[\operatorname{Gal}(\Q_S/\Q)]$-module obtained by composing $\overline{\rho}$ with the adjoint representation of the Lie algebra of $G$.
This opens up the possibility of bringing in Galois cohomology as a major tool, and the Galois cohomological method can be seen as an extension of Mazur's work that pushes this perspective forward, by showing that one can construct a deformation theory of Galois representations solely on a foundation of Galois cohomology, and then prove many of the theorems that used to require a range of automorphic and geometric techniques.
The method rests on a local-global principle:
Locally at places away from the wild prime $p$, 
$G(\overline{\F}_p)$-valued Galois representations factor through quotients of the local absolute Galois groups with simple presentations in terms of generators and relations, and therefore such local lifting problems are easier to solve. More precisely, at unramified places $v$, the group $\Gal(\Q^{\operatorname{unr}}_v/\Q_v) \simeq \hat{\Z}$ has cohomological dimension one, and local characteristic zero lifts at $v$ automatically exist. At tamely ramified places $v$, the group $\Gal(\Q^{\operatorname{tame}}_v/\Q_v)$ is topologically generated by inertia $\tau_v$ and a lift of the Frobenius automorphism, $\sigma_v$, subject to the single relation $\sigma_v\tau_v \sigma_v^{-1} = \tau_v^v.$
In the Galois cohomological method, the local-global principle 
is summarized by the observation that obstructions to the global lifting problem is detected by the cohomological Selmer group.
\begin{definition}
    Let $\Gamma$ be a topological group, $A$ a ring, $H$ an affine group scheme over $\operatorname{Spec}A$, and 
$\rho: \Gamma \to H(A)$ a continuous homomorphism. If the vector space $V$ is a representation of $H$, we let $\rho(V)$ be the $A[\Gamma]$-module
induced by $\rho$. 
\end{definition}

For the reductive group $G$, let $\mathfrak{g}$ denote the Lie algebra of $G$.
For each $v\in S$, let $\Gamma_v= \operatorname{Gal}(\overline{\Q}_v/\Q_v).$
Let 
$\mathcal{N}_v$ be a subspace of $H^1(\Gamma_v, \overline{\rho}(\mathfrak{g}))$. The collection of subspaces $\mathcal{N} := (\mathcal{N}_v )_{v\in S}$ is called a Selmer condition. 
Let $\Gamma_S = \operatorname{Gal}(\Q_S/\Q).$  

\begin{definition}
The Selmer group
$H^1_{\mathcal{N}} (\Gamma_S, \overline{\rho}(\mathfrak{g}))
$
is defined to be the kernel of the map
$$
H^1 (\Gamma_S, \overline{\rho}(\mathfrak{g})) \to \bigoplus_{v\in S} H^1(\Gamma_v, \overline{\rho}(\mathfrak{g})) / \mathcal{N}_v.
$$
Let $\mu_p$ be the group scheme of $p^{th}$ roots of unity and 
let $\overline{\rho}(\mathfrak{g})^* = 
\operatorname{Hom}(\overline{\rho}(\mathfrak{g}) ,\mu_p)$ denote the dual module of $\overline{\rho}(\mathfrak{g})$.
For each $v\in S$, let $\mathcal{N}_v^\perp
\subseteq H^1(\Gamma_v, \overline{\rho}(\mathfrak{g})^*)$ be the annihilator of $\mathcal{N}_v$ under the local pairing of Galois cohomology. The dual Selmer group 
$H^1_{\mathcal{N}^\perp} (\Gamma_S, \overline{\rho}(\mathfrak{g})^*)$
is the kernel of the map
$$
H^1 (\Gamma_S, \overline{\rho}(\mathfrak{g})^*) \to \bigoplus_{v\in S} H^1(\Gamma_v, \overline{\rho}(\mathfrak{g})^*) / \mathcal{N}_v^\perp.
$$
\end{definition}
For $G=GL(2),$ the module $\overline{\rho}(\mathfrak{g})$ (usually denoted
$\operatorname{Ad}(\overline{\rho})$) is the space of 2-by-2 matrices over the relevant finite field, 
and the Galois action induced by 
$\overline{\rho}$ is via composition with matrix conjugation. 

Let $f \geq 1$ be an integer such that 
$\overline{\rho}$ has image in $G(\F_{p^f})$.
Denote by $\W$ the ring of Witt vectors over $\F_{p^f}$.
For each $v\in S$, suppose the local deformation ring $R_v$ has a smooth quotient
$$
R_v \rightarrow \W[[T_1,\ldots,T_{n_v}]].
$$
The dual morphism on tangent spaces defines a subspace 
$\mathcal{N}_v \subseteq H^1(\Gamma_v, \overline{\rho}(\mathfrak{g}) )$.
Let $\mathcal{C}_v$ be the set of deformations of $\overline{\rho}|_{\Gamma_v}$ that factor through the smooth quotient.
Then $\mathcal{N}_v$ is the space of local cohomology classes that preserve $\mathcal{C}_v,$ and $n_v=\dim \mathcal{N}_v.$
The Selmer condition 
$\mathcal{N}=
(\mathcal{N})_{v \in S}$
gives rise to deformation conditions that define a global deformation ring
$R_{(\mathcal{N})_{v \in S}}$ parametrizing all deformations of $\overline{\rho}$
that locally factor through the smooth quotients
$$
R_v \rightarrow \W[[T_1,\ldots,
T_{\dim \mathcal{N}_v} ]]
$$
for all $v\in S$. 

\begin{definition} \label{balancedrankn}
    Let 
    $$
\overline{\rho}: \Gamma_S \to G( \F_{p^f}),
    $$
be a residual Galois representation.
We say that the global setting is balanced of rank $n$ if
$$n=\dim_{\F_{p^f}} H_{\mathcal{N}}^1(\Gamma_S, \overline{\rho}(\mathfrak{g})) 
    = \dim_{\F_{p^f}} H_{\mathcal{N}^\perp}^1(\Gamma_S,\overline{\rho}(\mathfrak{g})^*).$$
\end{definition}

To create global settings that are balanced, we will invoke Wiles' formula, which controls the relative sizes of the Selmer group and the dual Selmer group. 
\begin{proposition}[Wiles' formula]
    Let $T$ be a finite set of places containing $S$. 
    Let $M$  be a finite-dimensional vector space over a finite field equipped with a Galois action. 
    For each $v\in S,$ let $\mathcal{L}_v$ be a subspace of $H^1(\Gamma_v, M)$ with annihilator $\mathcal{L}_v^\perp \subseteq H^1(\Gamma_v, M^*)$ under the local pairing. Then 
\begin{IEEEeqnarray*}{ll}
 & \dim H^1_\mathcal{L} (\Gamma_T, M) - \dim H^1_{\mathcal{L}^\perp} (\Gamma_T, M^*) \\
= \quad & 
\dim H^0 (\Gamma_T, M) - \dim H^0 (\Gamma_T, M^*) 
+
\sum_{v\in T} \dim \mathcal{L}_v - \dim H^0(\Gamma_v, M).  
\end{IEEEeqnarray*}
\end{proposition}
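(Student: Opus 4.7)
The plan is to derive Wiles' formula from two pillars of global duality: the Poitou-Tate nine-term exact sequence, and the global Euler characteristic formula for finite Galois modules of $p$-power order. The strategy is to first splice the defining sequences of $H^1_\mathcal{L}$ and $H^1_{\mathcal{L}^\perp}$ into a single long exact sequence, then reduce the identity on dimensions to a purely numerical one that collapses under Euler characteristic bookkeeping.

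First I would write down the defining short exact sequence
$$0 \to H^1_\mathcal{L}(\Gamma_T, M) \to H^1(\Gamma_T, M) \to \bigoplus_{v \in T} H^1(\Gamma_v, M)/\mathcal{L}_v,$$
together with its analog for $(M^*, \mathcal{L}_v^\perp)$. Local Tate duality identifies the target $\bigoplus_v H^1(\Gamma_v, M)/\mathcal{L}_v$ with the Pontryagin dual of $\bigoplus_v \mathcal{L}_v^\perp$, since by construction $\mathcal{L}_v^\perp$ is the annihilator of $\mathcal{L}_v$ under the perfect local pairing. Using this identification together with the middle four terms of the Poitou-Tate nine-term sequence, the two sides splice into
\begin{IEEEeqnarray*}{c}
0 \to H^1_\mathcal{L}(\Gamma_T, M) \to H^1(\Gamma_T, M) \to \bigoplus_v H^1(\Gamma_v, M)/\mathcal{L}_v \to  \\
H^1_{\mathcal{L}^\perp}(\Gamma_T, M^*)^\vee \to H^2(\Gamma_T, M) \to \bigoplus_v H^2(\Gamma_v, M) \to H^0(\Gamma_T, M^*)^\vee \to 0.
\end{IEEEeqnarray*}
Taking the alternating sum of dimensions, applying local Tate duality in the form $\dim H^2(\Gamma_v, M) = \dim H^0(\Gamma_v, M^*)$, and rewriting $\dim H^1(\Gamma_v, M)/\mathcal{L}_v = \dim H^1(\Gamma_v, M) - \dim \mathcal{L}_v$, one reduces the statement to a numerical identity in $\dim H^i(\Gamma_T, M)$, $\dim H^i(\Gamma_v, M)$, and $\dim \mathcal{L}_v$.

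The final step closes this numerical identity by invoking the global Euler characteristic formula
$$\chi(\Gamma_T, M) = \dim H^0(\Gamma_T, M) - \dim H^1(\Gamma_T, M) + \dim H^2(\Gamma_T, M) = -\sum_{v \mid \infty} \dim H^0(\Gamma_v, M),$$
valid for finite $p$-power torsion $M$, together with the local Euler characteristic formulas at each $v \in T$. The main bookkeeping obstacle will be in the archimedian contribution: the local Euler characteristic at $v=\infty$ differs from its finite-place analog, and one must verify that these archimedian terms cancel cleanly with the $\dim H^0(\Gamma_T, M^*)$ term coming from the right end of the spliced sequence. Once these cancellations are carried out, the remaining terms collapse precisely to Wiles' formula.
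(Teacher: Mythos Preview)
Your proof sketch is correct and is in fact the standard derivation of this formula from Poitou--Tate duality and the global Euler characteristic formula; the spliced seven-term sequence you wrote down is the usual one, and the bookkeeping with the archimedean terms does work out as you anticipate. The paper, however, does not prove this proposition at all: it simply cites Proposition~1.6 of \cite{Wiles} and moves on. So there is nothing to compare at the level of argument; you have supplied a genuine proof where the paper only gives a reference.
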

\begin{proof}
    See Proposition 1.6 in \cite{Wiles}.
\end{proof}

\section{Balanced global settings of rank one}
For a finite set $S$ of places of $\Q$ containing $p$ and the archimedian place,
let
$\Gamma_S=\operatorname{Gal}(\Q_S/\Q)$. 
In this section, we consider irreducible Galois representations
$$
\overline{\rho}: \Gamma_S \to GL(2,\F_{p^f}). 
$$
with image containing  $SL(2, \F_{p^f})$.
We fix determinants, while we do not impose any parity condition on $\overline{\rho}$. With applications of Wiles' formula in mind, we will require that the spaces of global Galois invariants in the adjoint representation and its dual are of the same rank:
$$
\dim H^0(\Gamma_S, \operatorname{Ad}^0(\overline{\rho})) = \dim H^0(\Gamma_S, \operatorname{Ad}^0(\overline{\rho})^*).
$$
Let $\W$ be the ring of Witt vectors over $\F_{p^f}$.
\begin{lemma} \label{flatness}
Let $(\mathcal{N}_v)_{v \in S}$ be a Selmer condition. 
If the global deformation ring $R_{(\mathcal{N}_v)_{v \in S}}$ is flat over $\W$, then there exists a finite ring extension $\mathcal{O}$ of $\Z_p$ and
        a Galois representation 
$$
\rho: \Gamma_{S} \to GL(2, \mathcal{O})
$$
lifting $\overline{\rho}$ such that $\rho|_{G_v} \in \mathcal{C}_v$ for all $v\in S$.
\end{lemma}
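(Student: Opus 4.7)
The plan is to obtain the lift by passing from $R:=R_{(\mathcal{N}_v)_{v\in S}}$ to a characteristic-zero quotient via a minimal prime, and then to specialize the universal deformation. The hypothesis that $\overline{\rho}$ has image containing $\operatorname{SL}(2,\F_{p^f})$ guarantees that $\overline{\rho}$ is absolutely irreducible with scalar centralizer, so by Mazur's theory the universal deformation ring $R$ really is universal and carries a universal representation
$$
\rho^{\mathrm{univ}}\colon \Gamma_S \longrightarrow \operatorname{GL}(2,R).
$$
In particular, for any ring $A$ and any $\W$-algebra map $\phi\colon R\to A$, the push-forward $\phi\circ\rho^{\mathrm{univ}}$ is a lift of $\overline{\rho}$ to $\operatorname{GL}(2,A)$.

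Next I would produce a map $\phi\colon R\to\mathcal{O}$ for a suitable finite extension $\mathcal{O}$ of $\W$. Since $R$ is finite and flat over the discrete valuation ring $\W$, it is a finitely generated, $p$-torsion-free $\W$-module. In particular $R$ is nonzero and the ring $R[1/p]$ is nonzero, so $R$ admits a minimal prime $\mathfrak{q}$ with $p\notin\mathfrak{q}$. The quotient $R/\mathfrak{q}$ is then a domain that is finite and integral over $\W$, so its fraction field $K$ is a finite extension of $\operatorname{Frac}(\W)$, and the integral closure $\mathcal{O}$ of $\W$ in $K$ is a finite $\W$-algebra. The composition
$$
R \twoheadrightarrow R/\mathfrak{q} \hookrightarrow \mathcal{O}
$$
provides the desired $\phi$, and setting $\rho := \phi\circ\rho^{\mathrm{univ}}$ yields a lift $\rho\colon \Gamma_S\to\operatorname{GL}(2,\mathcal{O})$ of $\overline{\rho}$.

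Finally, I would verify the local conditions. By construction, the deformation functor represented by $R_{(\mathcal{N}_v)_{v\in S}}$ is the subfunctor of the unrestricted deformation functor consisting of those lifts whose local restriction at each $v\in S$ factors through the prescribed smooth quotient $R_v\twoheadrightarrow \W[[T_1,\ldots,T_{n_v}]]$, i.e.\ belongs to $\mathcal{C}_v$. Since $\rho$ arises from a $\W$-algebra map out of $R_{(\mathcal{N}_v)_{v\in S}}$, its restriction $\rho|_{\Gamma_v}$ automatically lies in $\mathcal{C}_v$ for every $v\in S$, which completes the argument.

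The only real content is the passage from flatness and finiteness to the existence of a characteristic-zero point, and even this is essentially formal once one has a minimal prime avoiding $p$; the honest difficulty lies in establishing the flatness hypothesis in the first place, which is the subject of the main theorem rather than of this lemma.
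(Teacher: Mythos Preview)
Your proof is correct and self-contained. The paper, by contrast, gives no argument at all: it simply cites Proposition~2.2(i) of Khare--Wintenberger's work on Serre's conjecture. So the approaches differ only in that you actually spell out the standard commutative-algebra argument (finite flat over a DVR $\Rightarrow$ $p$-torsion-free $\Rightarrow$ every minimal prime avoids $p$ $\Rightarrow$ a characteristic-zero domain quotient exists), whereas the paper outsources this to a reference. Your version has the advantage of being transparent and not requiring the reader to chase a citation; the paper's version is shorter. Mathematically there is nothing to choose between them, since the cited result rests on the same mechanism.

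One minor comment: you do not really need to invoke absolute irreducibility or Mazur's universality to justify the existence of $\rho^{\mathrm{univ}}$. Even in the versal setting there is a versal deformation $\rho^{\mathrm{ver}}\colon \Gamma_S \to \operatorname{GL}(2,R)$, and pushing it forward along $R\to\mathcal{O}$ already gives what you need. But the standing hypotheses of the section do include image containing $\operatorname{SL}(2,\F_{p^f})$, so your remark is not wrong, merely slightly stronger than required.
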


\begin{proof}
This Lemma follows from the work of Khare and Wintenberger on Serre's conjecture (\cite{KW2} Proposition 2.2(i) p. 511).
\end{proof}

\begin{remark}[Remark to Lemma \ref{flatness}]
The ring $\Z_p[[T]]/(pT)$ is not finite and flat over $\Z_p$, while it has a characteristic zero point (obtained by mapping $T$ to zero). 
\end{remark}
By $\operatorname{Ad}^0(\overline{\rho}),$ we mean the Galois stable subspace
of the adjoint representation consisting of traceless matrices. 
\begin{lemma} \label{1gen1rel}
Consider a residual Galois representation    
$$
\overline{\rho}: \Gamma_S \to GL(2, \F_{p^f}).
$$
If the global setting is balanced of rank one
in the sense that 
$$1=\dim_{\F_{p^f}} H_{\mathcal{N}}^1(\Gamma_S, 
\operatorname{Ad}^0(
\overline{\rho})) 
    = \dim_{\F_{p^f}} H_{\mathcal{N}^\perp}^1(\Gamma_S,
    \operatorname{Ad}^0(
\overline{\rho}))^*),$$
then
$$
R_{(\mathcal{N}_v)_{v \in S}} \simeq \W[[T]]/(f)
$$
for some $f \in \W[[T]].$
\end{lemma}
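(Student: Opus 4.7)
The plan is to exhibit $R := R_{(\mathcal{N}_v)_{v \in S}}$ as a quotient of $\W[[T]]$ by a principal ideal, by bounding separately the minimal number of topological generators and of relations. The strategy is the standard presentation-of-a-deformation-ring technique, specialized to the balanced rank-one setting.

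First, I would compute the mod-$(\varpi, \mathfrak{m}_R^2)$ cotangent space of $R$ over $\W$, where $\varpi$ is a uniformizer of $\W$. By Mazur's theory this is canonically isomorphic to $H^1_{\mathcal{N}}(\Gamma_S, \operatorname{Ad}^0(\overline{\rho}))$; we may work with the traceless part $\operatorname{Ad}^0$ rather than the full adjoint because the determinant is fixed. By the rank-one hypothesis this space is one-dimensional, so lifting a generator to $\mathfrak{m}_R$ and invoking the complete version of Nakayama's lemma produces a surjection $\W[[T]] \twoheadrightarrow R$; hence $R \simeq \W[[T]]/I$ for some ideal $I$.

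Second, I would bound the minimal number of generators of $I$. The standard obstruction calculus for deformations along small extensions of Artin local $\W$-algebras yields an injection of $I/\mathfrak{m} I$ into an obstruction module $H^2_{\mathcal{N}}(\Gamma_S, \operatorname{Ad}^0(\overline{\rho}))$, constructed from the global $H^2$ together with the images in the local $H^2$'s of the smooth quotients cutting out the Selmer condition. Combining the Poitou--Tate nine-term exact sequence, Tate local duality, and the invariant-matching hypothesis
\[
\dim H^0(\Gamma_S, \operatorname{Ad}^0(\overline{\rho})) \;=\; \dim H^0(\Gamma_S, \operatorname{Ad}^0(\overline{\rho})^*)
\]
imposed at the start of the section, Wiles' formula then identifies this obstruction module with (a subspace of) the dual Selmer group, yielding
\[
\dim H^2_{\mathcal{N}}(\Gamma_S, \operatorname{Ad}^0(\overline{\rho})) \;\leq\; \dim H^1_{\mathcal{N}^\perp}(\Gamma_S, \operatorname{Ad}^0(\overline{\rho})^*) \;=\; 1.
\]
Hence $I$ can be generated by a single element, giving $R \simeq \W[[T]]/(f)$.

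The main obstacle is the cohomological identification in the second step: one has to chase through the correctly Selmer-modified Poitou--Tate sequence, exploiting the smoothness of the chosen local quotients so that the obstruction space dualizes cleanly to the dual Selmer group and not a strictly larger module. Once that identification is in place, the balanced rank-one hypothesis immediately forces the ideal of relations to be principal, which is exactly the content of the lemma.
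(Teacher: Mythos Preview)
Your proposal is correct and follows essentially the same approach as the paper. The paper's proof is the two-line statement that the deformation ring has $\dim H^1_{\mathcal{N}} = 1$ generators and $\dim H^1_{\mathcal{N}^\perp} = 1$ relations; you have simply unpacked the standard tangent-space and obstruction-theory arguments that justify those two bounds, which the paper leaves implicit.
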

\begin{proof}
    The global deformation ring $R_{(\mathcal{N}_v)_{v \in S}}$ has
    $\dim H_{\mathcal{N}}^1(G_S, Ad^0(\overline{\rho})) = 1$ 
     generators, and 
     $ \dim H_{\mathcal{N}^\perp}^1(G_S,Ad^0(\overline{\rho})^*) = 1$ 
     relations (see the proof of \cite[Lemma 1.1]{artinII} and the remarks following the proof). 
\end{proof}

\subsection{Auxiliary primes for $p\geq 5$}

First, we focus on residual representations
in characteristic $p\geq 5$ and identify a suitable type of auxiliary primes in this case. 

\begin{definition}[Nice primes] \label{nice}
Let $p\geq 5$ and consider an irreducible
$$
\overline{\rho}: \Gamma_S \to GL(2, \F_{p^f}).
$$
with image containing  $SL(2, \F_{p^f}).$
    A prime $v$ is nice for $\overline{\rho}$ if 
    \begin{enumerate}
        \item $v \not \equiv \pm1 \mod p$
        \item $\overline{\rho}$ is unramified at $v$
        \item $\overline{\rho}(\sigma_v)$ has eigenvalues of ratio $v$, where $\sigma_v$ denotes the Frobenius automorphism at $v$. 
    \end{enumerate}
\end{definition}

\begin{lemma} \label{localranks5} Let $p\geq 5$ and let $v$ be a nice prime. 
Suppose $\overline{\rho}$ is irreducible with image containing
$\operatorname{SL}(2,\F_{p^f}).$
The local cohomology groups at $v$ have the properties that
$$
\dim H^0(\Gamma_v, \operatorname{Ad}^0(\overline{\rho})) = 1,
\quad 
\dim H^2( \Gamma_v, \operatorname{Ad}^0(\overline{\rho})) = 1,
\quad 
\dim H^1(\Gamma_v, \operatorname{Ad}^0(\overline{\rho})) = 2.
$$
\end{lemma}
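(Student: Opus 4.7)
The plan is to exploit the fact that $\overline{\rho}$ is unramified at $v$, so that $\Gamma_v$ acts on $\operatorname{Ad}^0(\overline{\rho})$ through the pro-cyclic unramified quotient topologically generated by $\sigma_v$, and then read off the three desired dimensions from the eigenvalues of Frobenius on $\operatorname{Ad}^0(\overline{\rho})$. All three conditions in the definition of a nice prime enter in a clean and quantifiable way, and the proof is essentially bookkeeping with local Tate duality and the local Euler characteristic formula once the eigenvalue structure is in hand.

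First I would diagonalize $\overline{\rho}(\sigma_v)$ over $\overline{\F}_p$ with eigenvalues $\alpha$ and $v\alpha$; these are distinct because $v\not\equiv 1\bmod p$, so Frobenius acts semisimply. In the associated traceless basis, say $\{H, E_{12}, E_{21}\}$, the adjoint action of $\sigma_v$ on $\operatorname{Ad}^0(\overline{\rho})$ is diagonal with eigenvalues $1, v^{-1}, v$. Since $v\not\equiv\pm 1\bmod p$, all three are distinct and exactly one of them equals $1$; as $H^0$ of an unramified module is the Frobenius-fixed subspace, this gives $\dim H^0(\Gamma_v,\operatorname{Ad}^0(\overline{\rho})) = 1$. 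The $\F_{p^f}$-dimension agrees with the $\overline{\F}_p$-dimension by flat base change, so passing to an extension to diagonalize is harmless.

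For $H^2$ I would appeal to local Tate duality, which gives $H^2(\Gamma_v, \operatorname{Ad}^0(\overline{\rho})) \cong H^0(\Gamma_v, \operatorname{Ad}^0(\overline{\rho})^*)^\vee$, where $\operatorname{Ad}^0(\overline{\rho})^* = \operatorname{Hom}(\operatorname{Ad}^0(\overline{\rho}), \mu_p)$. Since the mod-$p$ cyclotomic character sends $\sigma_v$ to $v$ and the dual introduces an inverse, the Frobenius eigenvalues on $\operatorname{Ad}^0(\overline{\rho})^*$ are $v,\, v^2,\, 1$. The niceness conditions $v\not\equiv 1$ and $v\not\equiv -1\bmod p$ rule out $v=1$ and $v^2=1$ respectively, so again exactly one eigenvalue equals $1$, and $\dim H^2(\Gamma_v,\operatorname{Ad}^0(\overline{\rho})) = 1$.

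Finally, $\dim H^1$ drops out of the local Euler characteristic formula, valid because $v\neq p$ (another consequence of niceness):
$$
\dim H^0(\Gamma_v, \operatorname{Ad}^0(\overline{\rho})) - \dim H^1(\Gamma_v, \operatorname{Ad}^0(\overline{\rho})) + \dim H^2(\Gamma_v, \operatorname{Ad}^0(\overline{\rho})) = 0,
$$
so $\dim H^1 = 2$. The only real content of the argument is the three numerical inequalities $v\not\equiv 1$, $v\not\equiv -1$, and $v\neq p$ built into Definition \ref{nice}, which are precisely what is needed to force the adjoint eigenvalue pattern to produce the stated ranks. The global hypotheses of irreducibility and image containing $\operatorname{SL}(2,\F_{p^f})$ are not needed for this purely local computation; they will matter for how the lemma is applied.
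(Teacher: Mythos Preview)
Your argument is correct. The paper does not actually give a proof of this lemma; it simply cites \cite{KLR1}, Lemma~2, p.~714, for which your eigenvalue computation together with local Tate duality and the local Euler characteristic formula is precisely the standard argument one finds there.
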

\begin{proof} See \cite{KLR1} Lemma 2 p. 714. 
\end{proof}
Let $p\geq 5$, let $v$ be a nice prime, 
and let $\sigma_v, \tau_v$ be topological generators of the tame quotient of $\Gamma_v$, where $\sigma_v$ is a lift of Frobenius and $\tau_v$ is a generator of inertia. 
Up to a twist (cf. \cite{KLR1} p. 714),
$$
\overline{\rho}|_{G_v}: 
\sigma_v \mapsto 
\begin{pmatrix}
v & 0 \\
0 & 1
\end{pmatrix},
\quad 
 \tau_v \mapsto 
\begin{pmatrix}
1 & 0 \\
0 & 1
\end{pmatrix}.
$$
Let $\W$ be the ring of Witt vectors over $\F_{p^f}$.
Let $\mathcal{C}_v$ be the set of deformations of 
$\overline{\rho}|_{\Gamma_v}$ valued in $\operatorname{GL}(2,A)$ for Artin algebras $A$ over $\W$
such that 
$$
\pi_v: 
\sigma_v \mapsto 
\begin{pmatrix}
v & 0 \\
0 & 1
\end{pmatrix},
\quad 
 \tau_v \mapsto 
\begin{pmatrix}
1 & y \\
0 & 1
\end{pmatrix},
$$
where $y$ is in the maximal ideal of $A$. 
Let $\mathcal{N}_v$ be the maximal subspace of $H^1(\Gamma_v\Ad)$ such that for any small extension of Artin algebras $A\to B$,
a generator $\varepsilon \in \ker(A\to B)$, and any $\rho_v \in \mathcal{C}_v$ with coefficients in $B$, we have 
\[
(f\in \mathcal{N}_v) \implies (1+\varepsilon f)\rho_v \in \mathcal{C}_v.
\]
The space $\mathcal{N}_v$ is called the subspace of 
$H^1(\Gamma_v, \operatorname{Ad}^0(\overline{\rho}))$ that preserves $\mathcal{C}_v$, and
the pair $(\mathcal{C}_v, \mathcal{N}_v)$ satisfies the properties P1-P7 in \cite{artinII}. 
\begin{lemma} \label{vnice}
Let $p \geq 5$ and let $v$ be a nice prime. 
Suppose $\overline{\rho}$ is irreducible with image containing
$\operatorname{SL}(2,\F_{p^f}).$
Let  $\mathcal{N}_v$ be the subspace of $ H^1(\Gamma_v, Ad^0(\overline{\rho}))$ that preserves $\mathcal{C}_v$.
The pair $(\mathcal{N}_v, \mathcal{C}_v)$ at a nice prime $v$ is balanced in the sense that  
$$
\dim \mathcal{N}_v = \dim H^0(\Gamma_v, \operatorname{Ad}^0(\overline{\rho}) )
$$
and is spanned by the local cohomology class
\begin{equation}\label{act}
r_v: 
\sigma_v \mapsto 
\begin{pmatrix}
0 & 0 \\
0 & 0
\end{pmatrix},
\quad 
 \tau_v \mapsto 
\begin{pmatrix}
0 & 1 \\
0 & 0
\end{pmatrix}.
\end{equation}
\end{lemma}
\begin{proof} With respect to the action described in \eqref{act},
the cohomology class $r_v$ preserves $\mathcal{C}_v$, yet not every class in $H^1(\Gamma_v, \Ad)$ has this property.
By Lemma \ref{localranks5}, $\dim H^1(\Gamma_v \Ad) = 2$, hence
$\dim \mathcal{N}_v = 1$ and $\operatorname{span}_{\F_p}\{r_v \} = \mathcal{N}_v$.
Once we notice that $\dim H^0(\Gamma_v, \Ad) =1$ (cf. Lemma \ref{localranks5}), the lemma follows. 
\end{proof}
\begin{lemma} \label{remainbalanced5}
Let $p\geq 5.$
    Suppose 
    $$
    \overline{\rho}: \Gamma_S \to GL(2, \F_{p^f})
    $$
    is irreducible with image containing
$\operatorname{SL}(2,\F_{p^f})$ and that the global setting is balanced. 
    If we allow ramification at an auxiliary nice prime $v$, then the global setting remains balanced in the sense that
    $$
\dim H^1_{\mathcal{N}} (\Gamma_{S\cup\{v\}}, \operatorname{Ad}^0(\overline{\rho}))
=\dim H^1_{\mathcal{N}^\perp} (\Gamma_{S\cup\{v\}}, \operatorname{Ad}^0(\overline{\rho})^*).$$
\end{lemma}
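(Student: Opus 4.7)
The plan is to deduce balancedness after enlarging $S$ by a direct comparison of Wiles' formula applied at the two levels $S$ and $S\cup\{v\}$.

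First, I would apply Wiles' formula to the module $M=\operatorname{Ad}^0(\overline{\rho})$ at the level $T=S$, with the given Selmer condition $\mathcal{N}=(\mathcal{N}_w)_{w\in S}$. By the hypothesis that the global setting is balanced at the minimal level, the left-hand side of Wiles' formula vanishes, which pins down the value of the right-hand side (a sum of purely local and $H^0$-terms) to zero. Next, I would apply Wiles' formula at $T=S\cup\{v\}$ for the enlarged Selmer condition obtained by adjoining $\mathcal{N}_v$ at the new prime. Because $\overline{\rho}$ is already defined on $\Gamma_S$, the modules $\operatorname{Ad}^0(\overline{\rho})$ and $\operatorname{Ad}^0(\overline{\rho})^*$ are $\Gamma_{S\cup\{v\}}$-equivariantly the same as at level $S$, so the global $H^0$-terms and the local terms at all $w \in S$ in the two Wiles sums coincide.

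Consequently, the difference between the right-hand sides at levels $S\cup\{v\}$ and $S$ is precisely the single new summand
\[
\dim \mathcal{N}_v - \dim H^0(\Gamma_v, \operatorname{Ad}^0(\overline{\rho})).
\]
Here I would invoke Lemma \ref{vnice}, which tells us that the pair $(\mathcal{N}_v,\mathcal{C}_v)$ at a nice prime is balanced in exactly the sense that $\dim\mathcal{N}_v = \dim H^0(\Gamma_v,\operatorname{Ad}^0(\overline{\rho}))$. So the new summand vanishes, which means Wiles' formula at level $S\cup\{v\}$ also yields $0$ on the right, hence on the left:
\[
\dim H^1_{\mathcal{N}}(\Gamma_{S\cup\{v\}},\operatorname{Ad}^0(\overline{\rho})) = \dim H^1_{\mathcal{N}^\perp}(\Gamma_{S\cup\{v\}},\operatorname{Ad}^0(\overline{\rho})^*).
\]

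This is essentially all there is to it; the proof is a one-line bookkeeping consequence of Wiles' formula once the local balancedness at $v$ from Lemma \ref{vnice} is in hand. The only genuinely substantive inputs are (i) the explicit local computation at a nice prime (carried out in Lemmas \ref{localranks5} and \ref{vnice}) and (ii) the observation that enlarging $S$ does not alter the global $H^0$'s since the coefficient module factors through $\Gamma_S$. Neither is an obstacle here; the main content of the lemma lies in having set up the nice-prime deformation condition with the right local dimensions beforehand.
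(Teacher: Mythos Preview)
Your proof is correct and matches the paper's approach exactly: the paper's proof is the single line ``This follows from Lemma \ref{vnice} and Wiles' formula,'' and you have simply spelled out the bookkeeping behind that sentence.
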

\begin{proof}
    This follows from Lemma \ref{vnice} and Wiles' formula. 
\end{proof}

\subsection{Auxiliary primes for $p=3$}
Next, we turn our attention to the case of residual representations
in characteristic $p=3$ 
and identity the suitable class of auxiliary primes in this case. 
Again, we assume that $\overline{\rho}$ is irreducible with image containing
$\operatorname{SL}(2,\F_{3^f}).$
\begin{definition} \label{C3}
    Let $C^{(3)}$ be the Chebotarev set of primes $v$ such that 
\begin{enumerate}
    \item $v \equiv 1 \mod 3,$ 
    \item The Frobenius automorphism at $v$ has order 3 in $\Q(\overline{\rho})/\Q$.
\end{enumerate}
\end{definition}

Let $v \in C^{(3)}$ and fix a basis such that
$$
\overline{\rho}|_{G_v}: 
\sigma_v \mapsto 
\begin{pmatrix}
1 & 1 \\
0 & 1
\end{pmatrix},
\quad 
 \tau_v \mapsto 
\begin{pmatrix}
1 & 0 \\
0 & 1
\end{pmatrix}.
$$
\begin{lemma} \label{localranks3}
The local cohomology groups at $v$ have the properties that
$$
\dim H^0(\Gamma_v, Ad^0(\overline{\rho})) = 1,
\quad 
\dim H^2(\Gamma_v, Ad^0(\overline{\rho})) = 1,
\quad 
\dim H^1(\Gamma_v, Ad^0(\overline{\rho})) = 2.
$$
\end{lemma}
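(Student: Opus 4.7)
The plan is to establish the three dimensions separately via standard local Galois cohomology tools: a direct centralizer computation for $H^0$, local Tate duality combined with a self-duality of $\operatorname{Ad}^0(\overline{\rho})$ for $H^2$, and the local Euler characteristic formula for $H^1$. This parallels exactly the strategy used to prove Lemma \ref{localranks5} in the $p \geq 5$ case, with the role of the ``ratio $v$'' condition replaced by the unipotent order-three condition defining $C^{(3)}$.

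For $H^0$, since $v \in C^{(3)}$ is unramified for $\overline{\rho}$, the inertia $\tau_v$ acts trivially and we only need to compute the centralizer of $\overline{\rho}(\sigma_v) = \begin{pmatrix} 1 & 1 \\ 0 & 1 \end{pmatrix}$ acting by conjugation on $\operatorname{Ad}^0(\overline{\rho})$. Writing out the commutation relation for a traceless matrix $\begin{pmatrix} a & b \\ c & -a \end{pmatrix}$ and using that $2 \neq 0$ in $\F_{3^f}$ forces $c = 0$ and $a = 0$, leaving the one-dimensional space spanned by $\begin{pmatrix} 0 & 1 \\ 0 & 0 \end{pmatrix}$. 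Hence $\dim H^0(\Gamma_v, \operatorname{Ad}^0(\overline{\rho})) = 1$.

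For $H^2$, local Tate duality gives $\dim H^2(\Gamma_v, \operatorname{Ad}^0(\overline{\rho})) = \dim H^0(\Gamma_v, \operatorname{Ad}^0(\overline{\rho})^*)$. In residual characteristic three, the trace pairing $\langle X, Y \rangle = \operatorname{tr}(XY)$ remains nondegenerate on $\operatorname{Ad}^0$, so $\operatorname{Ad}^0(\overline{\rho})^* \cong \operatorname{Ad}^0(\overline{\rho}) \otimes \mu_3$ as $\Gamma_v$-modules. The congruence $v \equiv 1 \bmod 3$ in the definition of $C^{(3)}$ ensures $\mu_3 \subset \Q_v$, so $\mu_3$ is a trivial $\Gamma_v$-module and $\operatorname{Ad}^0(\overline{\rho})^* \cong \operatorname{Ad}^0(\overline{\rho})$ on the nose. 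Combined with step one this yields $\dim H^2(\Gamma_v, \operatorname{Ad}^0(\overline{\rho})) = 1$.

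For $H^1$, since $v$ has residue characteristic $\ell \equiv 1 \bmod 3$, in particular $\ell \neq 3$, the local Euler characteristic at $v$ for the $3$-torsion module $\operatorname{Ad}^0(\overline{\rho})$ is trivial, giving
\[
\dim H^1(\Gamma_v, \operatorname{Ad}^0(\overline{\rho})) = \dim H^0(\Gamma_v, \operatorname{Ad}^0(\overline{\rho})) + \dim H^2(\Gamma_v, \operatorname{Ad}^0(\overline{\rho})) = 2.
\]
I do not expect any serious obstacle: the only point requiring care is verifying that the two defining properties of $C^{(3)}$ are used exactly where they should be, namely the order-three unipotent behavior of Frobenius for the $H^0$ computation, and $\mu_3 \subset \Q_v$ for the self-duality step underlying $H^2$.
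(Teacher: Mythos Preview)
Your argument is correct. The paper actually states this lemma without proof (the analogous Lemma~\ref{localranks5} for $p\geq 5$ is disposed of by a citation to \cite{KLR1}), so there is nothing to compare against; the route you take---direct centralizer computation for $H^0$, the trace form plus $v\equiv 1\bmod 3$ to identify $\operatorname{Ad}^0(\overline{\rho})^*$ with $\operatorname{Ad}^0(\overline{\rho})$ locally for $H^2$, and the local Euler characteristic for $H^1$---is exactly the standard one and goes through without issue.
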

Let $\mathcal{C}_v$ be the set of deformations of 
$\overline{\rho}|_{G_v}$ valued in $\operatorname{GL}(2,A)$ for Artin algebras $A$ over $ \mathbb{W}(\F_{3^f})$
such that 
\begin{equation}\label{good}
\pi_v: 
\sigma_v \mapsto 
\begin{pmatrix}
\sqrt{v} & 1 + x \\
0 & \sqrt{v}^{-1}
\end{pmatrix},
\quad 
 \tau_v \mapsto 
\begin{pmatrix}
1 & y \\
0 & 1
\end{pmatrix}
\end{equation}
for some $x$ and $y$ in the maximal ideal of $A$. 
Note that $v\equiv 1 \mod 3$ implies $\sqrt{v} \in 1+3\Z_3.$ 
\begin{lemma} \label{v}
Let $p =3$ and let $v\in C^{(3)}$. 
Let  $\mathcal{N}_v$ be the subspace of $ H^1(\Gamma_v, \operatorname{Ad}^0(\overline{\rho}))$ that preserves $\mathcal{C}_v$.
The pair $(\mathcal{N}_v, \mathcal{C}_v)$ is balanced in the sense that  
$$
\dim \mathcal{N}_v = \dim H^0(\Gamma_v, \operatorname{Ad}^0(\overline{\rho}) )
$$
and is spanned by the local cohomology class
$$
r_v: 
\sigma_v \mapsto 
\begin{pmatrix}
0 & 0 \\
0 & 0
\end{pmatrix},
\quad 
 \tau_v \mapsto 
\begin{pmatrix}
0 & 1 \\
0 & 0
\end{pmatrix}.
$$
\end{lemma}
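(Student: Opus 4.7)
The plan is to explicitly parametrize the tangent space of the deformation type $\mathcal{C}_v$, project it into $H^1(\Gamma_v, \operatorname{Ad}^0(\overline{\rho}))$, and identify the image as the line spanned by $r_v$. Since $\dim H^0(\Gamma_v, \operatorname{Ad}^0(\overline{\rho})) = 1$ by Lemma \ref{localranks3}, the balanced condition $\dim \mathcal{N}_v = \dim H^0$ reduces to showing $\dim \mathcal{N}_v = 1$.

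First I would verify that $r_v$ is a well-defined $1$-cocycle. The tame quotient $\Gamma_v^{\operatorname{tame}}$ is presented by $\sigma_v, \tau_v$ subject to $\sigma_v \tau_v \sigma_v^{-1} = \tau_v^v$, and since $\overline{\rho}(\tau_v) = I$, inertia acts trivially on $\operatorname{Ad}^0(\overline{\rho})$. Hence the cocycle identity at the defining relation reduces to $\operatorname{Ad}(\overline{\rho}(\sigma_v))(r_v(\tau_v)) = v \cdot r_v(\tau_v)$. The matrix $r_v(\tau_v)$ is fixed by conjugation by the upper-triangular unipotent $\overline{\rho}(\sigma_v)$, and $v \equiv 1 \pmod 3$ in $\F_{3^f}$, so both sides agree.

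Next I would parametrize the tangent space of $\mathcal{C}_v$. Fix the basepoint lift $\pi_v^0 \in \mathcal{C}_v$ with $(x,y) = (0,0)$ over $\W / 9\W$. For a general lift $\pi_v^{x,y}$, define the difference cocycle $r_{x,y}$ pointwise by $(I + 3 r_{x,y}(g))\pi_v^0(g) = \pi_v^{x,y}(g)$; a direct matrix calculation yields
\[
r_{x,y}(\sigma_v) = \begin{pmatrix} 0 & x \\ 0 & 0 \end{pmatrix}, \qquad r_{x,y}(\tau_v) = \begin{pmatrix} 0 & y \\ 0 & 0 \end{pmatrix},
\]
so $\mathcal{C}_v$ has a two-dimensional tangent space parametrized by $(x,y) \in \F_{3^f}^2$. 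To cut this down to $\mathcal{N}_v$, I would mod out by coboundaries. Every coboundary $\delta M$ vanishes at $\tau_v$ because $\overline{\rho}(\tau_v) = I$, while for $M = \operatorname{diag}(-x/2, x/2)$ (using invertibility of $2$ in $\F_{3^f}$) a direct calculation gives $\delta M(\sigma_v) = r_{x,0}(\sigma_v)$. Hence $r_{x,0}$ is a coboundary, the $x$-direction is killed in $H^1$, and $\mathcal{N}_v$ is the image of the $y$-direction, spanned by $r_{0,1} = r_v$. This class is nontrivial because $r_v(\tau_v) \ne 0$ while every coboundary vanishes there, so $\dim \mathcal{N}_v = 1 = \dim H^0(\Gamma_v, \operatorname{Ad}^0(\overline{\rho}))$.

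The main technical obstacle is handling the $\sqrt{v}$-entries of $\pi_v^0$ correctly when extracting the difference cocycle: computing relative to $\pi_v^0$ rather than $\overline{\rho}$ absorbs the background contribution from $\sqrt{v} \in 1 + 3\Z_3$, and one must check that the resulting $\F_{3^f}$-linear cocycle calculus agrees with the one induced by $\overline{\rho}$, which it does because the two actions on $\operatorname{Ad}^0(\overline{\rho})$ coincide modulo $3$. A small consistency check is also needed that $\pi_v$ satisfies $\pi_v(\sigma_v)\pi_v(\tau_v)\pi_v(\sigma_v)^{-1} = \pi_v(\tau_v)^v$ for all $(x,y)$, which follows from the identity $\sqrt{v}^{\,2} = v$.
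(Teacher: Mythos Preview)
Your proof is correct. The paper states this lemma without proof, treating it (like the parallel Lemma \ref{vnice} for $p\geq 5$) as a routine local computation, so there is no argument to compare against; your explicit tangent-space calculation supplies exactly the omitted details.

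A couple of minor remarks. First, since $p=3$ you have $-2\equiv 1$ in $\F_{3^f}$, so instead of writing $M=\operatorname{diag}(-x/2,x/2)$ you may simply take $M=\operatorname{diag}(x,-x)$ and compute $\delta M(\sigma_v)=\begin{pmatrix}0&-2x\\0&0\end{pmatrix}=\begin{pmatrix}0&x\\0&0\end{pmatrix}$ directly; this avoids the slightly awkward appeal to invertibility of $2$. Second, your consistency check that $\pi_v(\sigma_v)\pi_v(\tau_v)\pi_v(\sigma_v)^{-1}=\pi_v(\tau_v)^v$ is worth writing out in one line, since it is what guarantees that the explicit family $\pi_v^{x,y}$ really does define homomorphisms (hence that $\mathcal{C}_v$ is smooth of relative dimension one over $\W$, which is what underlies the identification of $\mathcal{N}_v$ with the image of the $(x,y)$-plane in $H^1$).
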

\begin{proof}
We proceeds as for Lemma \ref{vnice}, first observing that $r_v$ preserves deformations of the form given in \eqref{good}, then complete the proof by appealing to Lemma \ref{localranks3}.
\end{proof}

\begin{lemma} \label{remainbalanced3}
    Suppose 
    $$
    \overline{\rho}: \Gamma_S \to GL(2, \F_{3^f})
    $$
    is irreducible with image containing 
    $\operatorname{SL}(2, \F_{3^f})$ and assume the global setting is balanced.
    If we allow ramification at an auxiliary prime $v\in C^{(3)}$, then the global setting remains balanced in the sense that
    $$
\dim H^1_{\mathcal{N}} (\Gamma_{S\cup\{v\}}, \operatorname{Ad}^0(\overline{\rho}))
=\dim H^1_{\mathcal{N}^\perp} (\Gamma_{S\cup\{v\}}, \operatorname{Ad}^0(\overline{\rho})^*).$$
\end{lemma}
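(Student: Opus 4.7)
The plan is to mimic the proof of Lemma \ref{remainbalanced5} in the $p\geq 5$ case: apply Wiles' formula at the two levels $S$ and $S\cup\{v\}$, and use the local balance at the auxiliary prime (Lemma \ref{v}) to show that the contribution of $v$ cancels.

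Concretely, set $M=\operatorname{Ad}^0(\overline{\rho})$ and $M^*=\operatorname{Hom}(M,\mu_p)$. Apply Wiles' formula to $T=S$ and to $T=S\cup\{v\}$; subtracting the two identities gives
\[
\bigl[\dim H^1_{\mathcal{N}}(\Gamma_{S\cup\{v\}},M) - \dim H^1_{\mathcal{N}^\perp}(\Gamma_{S\cup\{v\}},M^*)\bigr]
- \bigl[\dim H^1_{\mathcal{N}}(\Gamma_S,M) - \dim H^1_{\mathcal{N}^\perp}(\Gamma_S,M^*)\bigr]
\]
equals the new local contribution $\dim \mathcal{N}_v - \dim H^0(\Gamma_v, M)$, provided the global invariants $H^0(\Gamma_T,M)$ and $H^0(\Gamma_T,M^*)$ are unaffected by enlarging $T$. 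This last point is immediate: $M$ is defined through $\overline{\rho}$, which factors through $\Gamma_S$, and $\mu_p=\mu_3$ is unramified at $v\in C^{(3)}$ (since $v\equiv 1\bmod 3$ implies $v\neq 3$), so both $M$ and $M^*$ factor through $\Gamma_S$, and the surjection $\Gamma_{S\cup\{v\}}\twoheadrightarrow \Gamma_S$ identifies their global $H^0$'s.

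By hypothesis the global setting at level $S$ is balanced, so the right-hand bracket in the display vanishes. Lemma \ref{v} gives exactly $\dim \mathcal{N}_v = \dim H^0(\Gamma_v,M)$ for $v\in C^{(3)}$, so the new local contribution vanishes as well. Therefore the left-hand bracket vanishes, which is precisely the desired balancedness at level $S\cup\{v\}$.

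There is no real obstacle; the work has already been done in Lemma \ref{v}, where the existence of the balanced lifting class $r_v$ and the local cohomology computation of Lemma \ref{localranks3} are used to verify $\dim \mathcal{N}_v = \dim H^0(\Gamma_v,\operatorname{Ad}^0(\overline{\rho}))=1$. The only subtle point to double-check is that the Selmer condition $\mathcal{N}$ on the right-hand side of the conclusion is indeed the one extended from $S$ to $S\cup\{v\}$ by adjoining $\mathcal{N}_v$, so that Wiles' formula applies with matching local conditions at every place; this is built into the setup of $(\mathcal{N}_v,\mathcal{C}_v)$ in Lemma \ref{v}.
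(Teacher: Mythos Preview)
Your proof is correct and follows exactly the same approach as the paper, which simply states that the lemma follows from Lemma \ref{v} and Wiles' formula. Your added remark that the global $H^0$ terms are unchanged under enlarging $S$ is a helpful elaboration, but otherwise this is the intended argument.
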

\begin{proof}
    This follows from Lemma \ref{v} and Wiles' formula. 
\end{proof}

\subsection{Existence of $p$-adic representations without annihilating dual Selmer}
Theorem \ref{rankone} below focuses on a global setting with a nontrivial dual Selmer group of rank one; in particular, the technique in \cite{artinII, SFM} based on an effective construction of infinitesimal deformations is not available. 
The result we prove is that either the relevant global deformation ring is flat to begin with, so that a $p$-adic lift is known to exist; otherwise, we can make the global ring flat by allowing ramification at one additional prime, still keeping the global setting balanced of rank one. 
In particular, we prove that a $p$-adic lift exists \emph{without} annihilating the dual Selmer group. 

\begin{theorem} \label{rankone}
    Let $p \geq 3$ be a prime and let  $S$ be a finite set of places of $\Q$ containing $p$ and the archimedian place. Consider an irreducible residual Galois representation 
    $$
\overline{\rho}: \Gamma_S \to \operatorname{GL}(2, \F_{p^f}),
    $$
    with image containing $\operatorname{SL}(2, \F_{p^f})$
    whose parity is allowed to be either even or odd.
    Assume the global setting is balanced of rank one (cf. Definition \ref{balancedrankn}), and let $g$ span the Selmer group and $\gamma$ span the dual Selmer group. 
    Exactly one of the following is true:
    \begin{enumerate}
        \item $R_{(\mathcal{N}_v)_{v \in S}}$ 
        has a characteristic zero point: for some finite extension $\mathcal{O}/\Z_p$ there is an irreducible representation
        \[
        \rho: \Gamma_S \to GL(2,\mathcal{O})
        \]
    such that $\rho \equiv \overline{\rho} \bmod p$ and $\rho|_{\Gamma_v} \in \mathcal{C}_v$ for all $v\in S$.
        \item $R_{(\mathcal{N}_v)_{v \in S}}$ does not have a characteristic zero point; then there is a Chebotarev set $C^{(p)}$ such that for any prime $w\in C^{(p)}$ 
there exists, for some finite extension $\mathcal{O}/\Z_p$,
an irreducible representation
$$
\rho: \Gamma_{S\cup \{w\}} \to GL(2, \mathcal{O})
$$
ramified at $w$
with 
$\rho \equiv \overline{\rho} \mod p$
and
$\rho|_v \in \mathcal{C}_v$ for all 
$v\in S\cup \{ w\}.$ 
    \end{enumerate}
\end{theorem}

\begin{proof}
By Lemma \ref{1gen1rel} and the Weierstrass preparation theorem,
$$
R_{(\mathcal{N}_v)_{v \in S}} \simeq \W[[T]]/(p^\mu h(T))
$$
for some $\mu \geq 0$ and $h(T) \in \W[T]$ a distinguished polynomial, i.e. $h(T)$ is monic with all non-leading coefficients in the maximal ideal of $\W$. 
If $R_{(\mathcal{N}_v)_{v \in S}}$ has a characteristic zero point, 
the statement is clear. From now on, let us assume that 
$R_{(\mathcal{N}_v)_{v \in S}}$ does not have a characteristic zero point. Then we must have $\mu \geq 1$ and, moreover, it suffices to assume that 
$$
R_{(\mathcal{N}_v)_{v \in S}} \simeq
(\W/p^{\mu}\W) [[T]].
$$
Let 
$$\rho_\mathcal{N}: \Gamma_S \to GL(2,R_{(\mathcal{N}_v)_{v \in S}})$$
be the versal deformation of $\overline{\rho}$.
Let $d \in \Z_{\geq 2}$.
Firstly, suppose $p\geq 5$. Let $C_d^{(p)}$ be the Chebotarev set of primes $w$ such that
\begin{enumerate}
\item $w$ is nice cf. Definition \ref{nice},
\item $\rho_\mathcal{N}(\sigma_w) 
=\begin{pmatrix}
w(1+T^d) & 0 \\
0 & (1+T^d)^{-1} 
\end{pmatrix} \mod T^{d+1},$ \label{i}
\item $g|_{\Gamma_w} = 0,$ \label{ii}
\item $\gamma|_{\Gamma_w} \neq 0.$ \label{iii}
\end{enumerate}

If $p=3,$
let $C_d^{(3)}$ be the Chebotarev set of primes $w$ such that
\begin{enumerate}
\item $w \in C^{(3)}$ (cf. Definition \ref{C3}),
\item $\rho_\mathcal{N}(\sigma_w) 
=\begin{pmatrix}
\sqrt{w}(1+T^d) & 1 \\
0 & \sqrt{w}^{-1}(1+T^d)^{-1} 
\end{pmatrix} \mod T^{d+1},$ \label{i}
\item $g|_{\Gamma_w} = 0,$ \label{ii}
\item $\gamma|_{\Gamma_w} \neq 0.$ \label{iii}
\end{enumerate}
We will check that $C_d^{(p)}$ is non-empty for all $p\geq 3$.
Define $\Gamma_K :=\ker \overline{\rho}$ and let 
Let 
$K=\Q(\overline{\rho})$ be the field fixed by 
$\ker \overline{\rho}$ and define
$\Gamma_K := \ker \overline{\rho} = \Gal(\overline{K}/K)$.
Let $K_g/K$ be the extension of $K$ fixed by $\ker (g|_{\Gamma_K})$ and let
$K_\gamma/K$ be the field fixed by $\ker (\gamma|_{\Gamma_K})$.
We observe that
$\operatorname{Ad}^0(\overline{\rho})$ and $\operatorname{Ad}^0(\overline{\rho})^*$ are irreducible with 
\[
\Gal(K_g/K) \simeq \Ad, \quad \Gal(K_\gamma/K)\simeq \Ad^*,
\]
yet 
\[
\operatorname{Ad}^0(\overline{\rho}) \not\simeq \operatorname{Ad}^0(\overline{\rho})^*.\]
Hence $K_g/K$ and
$K_\gamma/K$ are linearly disjoint over $K$.
Since $d > 1$, the condition defining $\rho_\mathcal{N}(\sigma_w) \mod T^{d+1}$ is independent of the conditions
on $g$ and $\gamma$. Finally, for both $p\geq 5$ and for $p=3$, the first condition only involves the residual field $K$, whereas the remaining conditions involve fields above $K$. Hence, $C_d^{(p)}$ is an infinite set of primes for any $p\geq 3.$

Let $w\in C_d^{(p)}$. For any $p\geq 3$ it follows by
Lemma \ref{remainbalanced5} and Lemma \ref{remainbalanced3} that the global setting remains balanced after allowing ramification at $w$, i.e.
$$
\dim H^1_{\mathcal{N}} (\Gamma_{S\cup\{w\}}, \operatorname{Ad}^0(\overline{\rho}))
=\dim H^1_{\mathcal{N}^\perp} (\Gamma_{S\cup\{w\}}, \operatorname{Ad}^0(\overline{\rho})^*).
$$
In addition, the global setting remains of rank one, which we will now prove. 
By assumption, the generator $g$ of $H^1_{\mathcal{N}} (\Gamma_{S}, \operatorname{Ad}^0(\overline{\rho}))$ has 
$g|_{G_w} = 0,$ so $$g\in H^1_{\mathcal{N}} (\Gamma_{S\cup\{w\}}, \operatorname{Ad}^0(\overline{\rho})).$$ 
Therefore,
$$
1 \leq \dim H^1_{\mathcal{N}} (\Gamma_{S\cup\{w\}}, \operatorname{Ad}^0(\overline{\rho})).
$$
By an application of Wiles' formula, the injective inflation map
$$
H^1(\Gamma_{S}, \operatorname{Ad}^0(\overline{\rho})) \to H^1(\Gamma_{S \cup \{w \}}, \operatorname{Ad}^0(\overline{\rho}) )
$$
has cokernel of dimension less than or equal to $\dim H^2(\Gamma_w, \operatorname{Ad}^0(\overline{\rho})) = 1$.
The last equality uses Lemma \ref{localranks5} for $p\geq 5$ and Lemma \ref{localranks3} for $p=3$.
In particular, 
$$
\dim H^1_{\mathcal{N}} (\Gamma_{S\cup\{w\}}, \operatorname{Ad}^0(\overline{\rho}))
\leq \dim H^1_{\mathcal{N}} (\Gamma_{S}, \operatorname{Ad}^0(\overline{\rho})) + 1 = 2.
$$
So the global setting is balanced of either rank one or two. 

If the global setting was balanced of rank two, 
then $H^1_{\mathcal{N}^\perp} (\Gamma_{S\cup\{w\}}, \operatorname{Ad}^0(\overline{\rho})^*)$ would be the span 
of two linearly independent classes $\phi_1^{(w)},\phi_2^{(w)}$.
At the same time, the injective inflation map
$$
 H^1(\Gamma_{S}, \operatorname{Ad}^0(\overline{\rho})^*) \to H^1(\Gamma_{S \cup \{w \}}, \operatorname{Ad}^0(\overline{\rho})^* )
$$
has cokernel of rank less than or equal to $\dim H^2(\Gamma_w, \operatorname{Ad}^0(\overline{\rho})^*) = 1$.
This means that there exist $\lambda_1, \lambda_2 \in \F_{p^f}$, not both equal to zero, such that $\lambda_1 \phi_1^{(w)} + \lambda_2\phi_2^{(w)}$ inflates from 
$H^1(\Gamma_{S}, \operatorname{Ad}^0(\overline{\rho})^*).$
Let $\phi^{(w)}_{12}=\lambda_1 \phi_1^{(w)} + \lambda_2\phi_2^{(w)}.$
At inertia $\tau_w \in \Gamma_w,$
$$
\phi_{12}^{(w)} (\tau_w) = 0.
$$
Since $\phi_{12}^{(w)}|_{\Gamma_v} \in \mathcal{N}_v$ for all $v\in S,$
$$\phi_{12}^{(w)} \in 
H^1_\mathcal{N^\perp}(\Gamma_{S}, \operatorname{Ad}^0(\overline{\rho})^*) = \operatorname{span} \{ \gamma \},$$
so there is some $c\in \F_{p^f}^\times$ such that 
$$
\phi_{12}^{(w)} = c \gamma.
$$
By assumption, $\gamma|_{\Gamma_w} \neq 0,$ so it follows that $\phi_{12}^{(w)}|_{\Gamma_w} \neq 0.$
On the other hand, since $\phi_{12}^{(w)}|_{\Gamma_w} \in \mathcal{N}_w,$
there is some $c_1\in \F_{p^f}$ such that 
$$
\phi_{12}^{(w)}|_{\Gamma_w} = c_1\cdot r_w.   
$$
By Lemma \ref{vnice} (if $p\geq 5$) or Lemma \ref{v} (if $p=3$), 
$$
\phi_{12}^{(w)}(\tau_w) = c_1 r_w(\tau_w)
=
c_1 
\begin{pmatrix}
    0 & 1 \\
    0 & 0
\end{pmatrix}.
$$
It follows that $c_1 =0.$ But then $\phi_{12}^{(w)}|_{\Gamma_w} = 0,$ a contradiction.
In conclusion, after allowing ramification at $w$, the global setting remains of balanced of rank one. 
By the Weierstrass preparation theorem, 
$$
R_{(\mathcal{N}_v)_{v \in S\cup \{w\} }} \simeq \W[[T]]/(p^{\mu_w}h^{(w)}(T))
$$
where $h^{(w)}(T)$ is a distinguished polynomial. Suppose $\mu_w \geq 1$. There is a surjective morphism
\[
\begin{tikzcd} \pi^{(w)}_{d+1}: 
R_{(\mathcal{N}_v)_{v \in S\cup \{w\} }} \arrow[r, tail, twoheadrightarrow] & 
\F_{p^f}[[T]]  \arrow[r, tail, twoheadrightarrow]  & 
\F_{p^f}[[T]]/ (T^{d+1}),
\end{tikzcd}
\]
and since $\mu \geq 1$, there is a surjective morphism
\[
\begin{tikzcd} \pi_{d+1}:
R_{(\mathcal{N}_v)_{v \in S }} \arrow[r, tail, twoheadrightarrow] & 
\F_{p^f}[[T]]  \arrow[r, tail, twoheadrightarrow]  & 
\F_{p^f}[[T]]/ (T^{d+1}).
\end{tikzcd}
\]
The deformations 
$\pi^{(w)}_{d+1} \circ \rho_\mathcal{N}^{(w)}$ and 
$\pi^{}_{d+1} \circ \rho_\mathcal{N}^{}$ to 
$\operatorname{GL}(2,\F_{p^f}[[T]]/ (T^{d+1}))$ have the property that 
$$
\pi^{(w)}_{d+1} \circ \rho_\mathcal{N}^{(w)}
\equiv 
\pi^{}_{d+1} \circ \rho_\mathcal{N}^{}
\mod T^d.
$$
Since $H^1 (\Gamma_{S\cup\{w\}}, \operatorname{Ad}^0(\overline{\rho}))$
acts transitively on deformations to 
$\operatorname{GL}(2,\F_{p^f}[[T]]/ (T^{d+1}))$  that agree mod $T^d$, there is some 
$h \in H^1 (\Gamma_{S\cup\{w\}}, \operatorname{Ad}^0(\overline{\rho}))$
such that 
$$\pi^{(w)}_{d+1} \circ \rho_\mathcal{N}^{(w)}
= (I+T^d h ) (\pi^{}_{d+1} \circ \rho_\mathcal{N}^{}).
$$ 
Clearly, $h|_{\Gamma_v} \in \mathcal{N}_v$ for all $v\in S$. 
At the same time, 
$\rho_\mathcal{N}|_{\Gamma_w} \notin \mathcal{C}_w$, 
and hence 
$\pi^{}_{d+1} \circ \rho_\mathcal{N}^{}|_{\Gamma_w} \notin \mathcal{C}_w$. Thus $h|_{\Gamma_w} \notin \mathcal{N}_w$.
Consider the morphisms
$$
\varphi_{w}: H^1(\Gamma_{S\cup \{w \}}, \operatorname{Ad}^0(\overline{\rho})) \to \bigoplus_{v\in S} H^1(\Gamma_v,\operatorname{Ad}^0(\overline{\rho}))/\mathcal{N}_v,
$$
$$
\varphi_{S}: H^1(\Gamma_{S}, \operatorname{Ad}^0(\overline{\rho})) \to \bigoplus_{v\in S} H^1(\Gamma_v, \operatorname{Ad}^0(\overline{\rho}))/\mathcal{N}_v,
$$
and the induced maps on dual groups,
$$
\varphi^*_{w}: H^1(\Gamma_{S\cup \{w \}}, \operatorname{Ad}^0(\overline{\rho})^*) \to 
\bigoplus_{v\in S} H^1(\Gamma_v, \operatorname{Ad}^0(\overline{\rho})^*)/\mathcal{N}^\perp_v
\oplus H^1(\Gamma_w, \operatorname{Ad}^0(\overline{\rho})^*)/\{ 0 \},
$$
$$
\varphi^*_{S}: H^1(\Gamma_{S}, \operatorname{Ad}^0(\overline{\rho})^*) \to \bigoplus_{v\in S} H^1(\Gamma_v, \operatorname{Ad}^0(\overline{\rho}))/\mathcal{N}^\perp_v,
$$
Suppose $h^* \in \ker \varphi_w^*.$
    Since $h^*|_{\Gamma_w} =0$, $h^*$ inflates from 
$H^1(\Gamma_{S}, \operatorname{Ad}^0(\overline{\rho})^*)$. It follows that
$$
h^* = c \cdot \gamma 
$$
for some $c\in \F_{p^f}$. 
By assumption, $\gamma|_{\Gamma_w} \neq 0.$ It follows that 
$c=0,$ so $h^*=0$. That is, 
$$
\dim\ker \varphi_w^*=0.
$$
By two applications of Wiles' formula,
\begin{IEEEeqnarray*}{rCl}
  & &   (\dim \ker \varphi_{w} - \dim \ker \varphi_w^*)
-
(\dim \ker \varphi_S - \dim \ker \varphi_S^*) \\
 &=& (\dim \ker \varphi_{w} - 0) - (1-1) \\
 &=& 
 \dim H^1(\Gamma_w, \operatorname{Ad}^0(\overline{\rho})) - 
 \dim H^0(\Gamma_w, \operatorname{Ad}^0(\overline{\rho})) \\
  &=& 2 - 1 =1.
\end{IEEEeqnarray*}
We conclude that $\dim \ker \varphi_{w} =1$.
Since 
$$
h, g \in \ker \varphi_{w},
$$
it follows that $$g =c_0 \cdot h$$ 
for some $c_0 \in \F_{p^f}$.
However, $g|_{\Gamma_w} = 0$ while $h|_{\Gamma_w} \neq 0$. This forces
$c_0=0.$ But then $g=0,$ a contradiction. We conclude that 
$$
\mu_w= 0.
$$
\end{proof}

\begin{corollary}[Controlling the rank of the mod $p$ ring]
    Let $p\geq 3$ and suppose we are in the setting of Theorem \ref{rankone}.
    For any $d>1,$ there exists a Chebotarev set $C_d^{(p)}$ of auxiliary primes 
such that for $w \in C_d^{(p)}$,
$$
R_{(\mathcal{N}_v)_{v \in S\cup \{w\} }} 
\simeq \W[[T]]/(T^d + a_{d-1}T^{d-1} + \cdots + a_0)
$$
for some $a_i \in p\W,$ $i=0,\ldots, d-1$. Hence
$$
\dim_{\F_{p^f}} (R_{(\mathcal{N}_v)_{v \in S\cup \{w\} }}/pR_{(\mathcal{N}_v)_{v \in S\cup \{w\} }}) = d.
$$
\end{corollary}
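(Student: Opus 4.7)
The plan is to use Theorem~\ref{rankone} to present $R' := R_{(\mathcal{N}_v)_{v \in S \cup \{w\}}}$ as $\W[[T]]/(h(T))$ with $h$ distinguished, and then to pin down $\deg h = d$ by a sandwich argument. The shape $h(T) = T^d + a_{d-1}T^{d-1} + \cdots + a_0$ with $a_i \in p\W$ is precisely the statement that $h$ is distinguished of degree $d$, and the dimension formula follows from the identification $R'/(p) \simeq \F_{p^f}[[T]]/(T^{\deg h})$.

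For the lower bound $\deg h \geq d$, I would construct a mod-$p$ $(S\cup\{w\})$-deformation to $A := \F_{p^f}[T_S]/(T_S^d)$ by reducing the versal $S$-deformation $\rho_\mathcal{N}$ modulo $(p, T_S^d)$. By condition~(2), the matrix $\rho_\mathcal{N}(\sigma_w) \bmod (p, T_S^d)$ equals the $\mathcal{C}_w$-prescribed form exactly (the factors $(1+T_S^d)$ and $(1+T_S^d)^{-1}$ both collapse to $1$ modulo $T_S^d$), and $\rho_\mathcal{N}(\tau_w) = I$ is automatic since the level-$S$ deformation is unramified at $w$. The reduction therefore satisfies $\mathcal{C}_w$ and defines a bona fide $(S\cup\{w\})$-deformation, producing by versality a surjection $R'/(p) \twoheadrightarrow A$ and hence $\dim_{\F_{p^f}} R'/(p) \geq d$.

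For the upper bound $\deg h \leq d$, restrict the versal deformation $\rho_\mathcal{N}^{(w)}: \Gamma_{S\cup\{w\}} \to \operatorname{GL}(2, R')$ to $\Gamma_S$; this induces by versality a map $\psi: R_S \to R'$ with image $\phi := \psi(T_S) \equiv uT' \pmod{(T')^2}$ for some unit $u$ (from agreement of the one-dimensional tangent spaces at the two levels, guaranteed by condition~(3)). Since $\rho_\mathcal{N}^{(w)}(\sigma_w)$ lies in the $\mathcal{C}_w$-conjugacy class, its eigenvalues in $R'$ are the prescribed $\lambda_\pm$ ($\{w, 1\}$ for $p \geq 5$, $\{\sqrt{w}, \sqrt{w}^{-1}\}$ for $p = 3$). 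Pulling back condition~(2) along $\psi$ and matching the trace gives $(\lambda_+ - \lambda_-)\phi^d \in (\phi^{d+1})$. For $p \geq 5$, niceness makes $\lambda_+ - \lambda_- = w - 1$ a unit in $\W$, so $\phi^d = 0$ in $R'/(p)$ and $\deg h \leq d$. For $p = 3$, one supplements the trace obstruction with the matrix identity $(\rho_\mathcal{N}^{(w)}(\sigma_w) - I)^2 \in 9 \cdot M_2(R')$, readable from the $\mathcal{C}_w$-form where the diagonal lies in $1 + 3\W$, and uses $\W$-flatness of $R'$ to divide out appropriate powers of $3$, again yielding $\phi^d = 0$ and $\deg h \leq d$.

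The main obstacle is the upper-bound step in the $p = 3$ case, where $\sqrt{w} - \sqrt{w}^{-1}$ vanishes modulo $3$ and the naive trace obstruction is too weak; one has to combine it with the full matrix-valued strict-equivalence constraint coming from $\mathcal{C}_w$ and exploit the $\W$-flatness of $R'$ provided by Theorem~\ref{rankone} to extract the sharp relation $\phi^d = 0$. A secondary subtlety is verifying that $\phi \equiv u T' \pmod{(T')^2}$ with $u$ a unit, which amounts to translating the cohomological equality of tangent spaces at the two levels (each spanned by the class $g$, using condition~(3)) through Schlessinger's criterion to a ring-theoretic statement about $\psi$.
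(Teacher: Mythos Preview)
Your lower bound ($\deg h \geq d$) matches the paper's approach. For the upper bound, the paper takes a much shorter route: if $\deg h \geq d+1$ then $R' \twoheadrightarrow \F_{p^f}[[T]]/(T^{d+1})$, and one simply reruns the cohomological contradiction already carried out in the proof of Theorem~\ref{rankone} (producing a class $h \in \ker\varphi_w$ with $h|_{\Gamma_w}\neq 0$, while $g\in\ker\varphi_w$ has $g|_{\Gamma_w}=0$, contradicting $\dim\ker\varphi_w=1$). No new ingredients are needed.

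Your alternative strategy has a genuine gap. First, the map $\psi: R_S \to R'$ that you invoke cannot exist: by hypothesis $R_S \simeq (\W/p^\mu\W)[[T]]$ is $p^\mu$-torsion, whereas $R'$ is $\W$-flat by Theorem~\ref{rankone}, so there is no nonzero $\W$-algebra map between them. What does exist is $\psi: R_S \to R'/p$, because $\rho_\mathcal{N}^{(w)}(\tau_w)\equiv I \pmod p$, so the mod-$p$ universal $(S\cup\{w\})$-deformation is unramified at $w$ and hence classified by $R_S$. With this correction, for $p\geq 5$ your trace comparison goes through: $(w-1)\phi^d\in(\phi^{d+1})$ with $w-1\in\F_p^\times$ forces $\phi^d=0$.

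For $p=3$, however, the argument breaks down and cannot be repaired along the lines you suggest. Modulo $3$ the two Frobenius images
\[
\begin{pmatrix}1+\phi^d&1\\0&(1+\phi^d)^{-1}\end{pmatrix}
\quad\text{and}\quad
\begin{pmatrix}1&1\\0&1\end{pmatrix}
\]
share the characteristic polynomial $(X-1)^2$, and are in fact \emph{strictly equivalent} over $\F_3[[T]]/(T^{d+1})$: conjugation by $C=\begin{pmatrix}1&0\\T^d&1\end{pmatrix}\equiv I\pmod T$ carries one to the other. Hence no conjugation-invariant comparison in $R'/p$ can force $\phi^d=0$. Your proposed fix---the relation $(\rho_\mathcal{N}^{(w)}(\sigma_w)-I)^2\in 9\,M_2(R')$ combined with $\W$-flatness---lives in $R'$ itself, but you have no way to transport condition~(2) there: the only available comparison map lands in $R'/p$, where the factor $9$ vanishes. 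The paper's cohomological route sidesteps this issue entirely and works uniformly for all $p\geq 3$.
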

\begin{proof}
Let us keep the notation of the proof of Theorem \ref{rankone}. We will show that $h^{(w)}(T)$ has degree $d$. 
Since 
$$
\rho_\mathcal{N} \equiv \rho_\mathcal{N}^{(w)} \mod (p,T^d),
$$
it follows that 
\[
\begin{tikzcd} 
R_{(\mathcal{N}_v)_{v \in S\cup \{w\} }}  \arrow[r, tail, twoheadrightarrow]  & 
\F_{p^f}[[T]]/ (T^{d}).
\end{tikzcd}
\]
This implies that 
$$
(h^{(w)}(T) \mod p) \in (T^d).
$$
Hence the degree of $h^{(w)}(T)$ is at least $d$.
If the degree is at least $d+1$, then 
$$
h^{(w)}(T) \equiv T^{d+1+k}\mod p.
$$
for some $k\geq 0.$
Hence
\[
\begin{tikzcd} 
R_{(\mathcal{N}_v)_{v \in S\cup \{w\} }}  \arrow[r, tail, twoheadrightarrow]  & 
\F_{p^f}[[T]]/ (T^{d+1}),
\end{tikzcd}
\]
and we get the same contradiction as above. We conclude that the degree is exactly $d$.
\end{proof}

\begin{remark}
    Note that the proof above also works for odd $\overline{\rho}$ with a geometric condition at $p$.
\end{remark}

\section{Explicit examples of even deformation rings that are flat over $\Z_3$}

In \cite{auto},
an explicit family of even, irreducible representations
$$
\overline{\rho}^{(\ell)}: \Gal(\overline{\Q}/\Q ) \to \operatorname{SL}(2,\F_3)
$$
ramified only at primes $\ell \equiv 1 \bmod 3$ was constructed. 
For such primes $\ell$, there is a totally real cubic subfield $L$ of $\Q(\zeta_\ell)$. If the class number $h_L$ of $L$ is divisible by 4, there is an $A_4$-extension $K/\Q$ of discriminant $\ell^2$ containing $L$. Since $A_4\simeq \operatorname{PSL}(2,\F_3)$, the fields $K$ were shown to give rise to a family of even Galois representations $ \overline{\rho} =\overline{\rho}^{(\ell)}$ as above. 

\begin{lemma} \label{P}
Let $\ell=7489$ and let $L$ be the cubic subfield of $\Q(\zeta_\ell)$. 
Then the class number of $L$ is $h_L = 28$, 
and there exists a unique
$A_4$-extension $K/\Q$ containing $L$ and a corresponding even representation 
$$\overline{\rho} = \overline{\rho}^{(\ell)}: \Gal(\overline{\Q}/\Q) \to \operatorname{SL}(2,\F_3).$$  
Let $T=\{3,\ell\}$ and 
let $K_{T}^{(3)}$ denote the maximal 3-elementary abelian extension of $K$ 
unramified outside the places in $K$ above the places in $T$.
As $\Gal(K/\mathbb{Q})$-modules, there is a decomposition
$$
\Gal(K^{(3)}_T/K) \simeq \operatorname{Ad}^0(\overline{\rho})
\oplus \operatorname{Ad}^0 (\overline{\rho})
\oplus \mathbb{F}_3
\oplus \mathbb{F}_3
\oplus \mathbb{F}_3.
$$
Moreover, 
$$
\dim H^1 (\Gal(\Q_T/\Q), \operatorname{Ad}^0\overline{\rho}) =2.
$$
\end{lemma}
\begin{proof}
The prime $\ell =7489$ was found by numerical computations in magma and pari-gp. 
A ray class group computation in magma shows that $\Gal(K^{(3)}_T/K) $ has  9 generators. Each trivial representation occurring in the Jordan-H\"{o}lder sequence for $\Gal(K^{(3)}_T/K) $ descends to $L$. A ray class group computation over $L$ shows that there are 3 such trivial representations. The result follows as in Lemma 4 of \cite{even2}. The dimension of $H^1 (\Gal(\Q_T/\Q), \operatorname{Ad}^0\overline{\rho})$ equals 
 the number of adjoints in the decomposition of $\Gal(K^{(3)}_T/K)$ as an $A_4$-module.
\end{proof}

\begin{proposition} \label{Selmerone}
Let $\ell=7489$ and let $S=\{\infty,3,\ell\}$. Then
$$
\dim H^1_\mathcal{N} (\Gamma_S, \operatorname{Ad}^0\overline{\rho}) =1.
$$
\end{proposition}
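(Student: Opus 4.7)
The plan is to identify $H^1_\mathcal{N}(\Gamma_S, \operatorname{Ad}^0\overline{\rho})$ with a concrete subspace of $H^1(\Gamma_S, \operatorname{Ad}^0\overline{\rho})$, reduce it via inflation-restriction to a question about $\F_3[A_4]$-module homomorphisms out of a ray class group of $K=\Q(\overline{\rho})$, and then extract the answer from a Magma computation parallel to the one underlying Lemma \ref{P}. Since $\mathcal{N}_3=H^1(\Gamma_3, \operatorname{Ad}^0\overline{\rho})$ by Lemma \ref{3} and $\mathcal{N}_\infty=0=H^1(\Gamma_\infty, \operatorname{Ad}^0\overline{\rho})$ by Lemma \ref{infty}, the Selmer condition imposes no constraint at $3$ or $\infty$, and $H^1_\mathcal{N}(\Gamma_S, \operatorname{Ad}^0\overline{\rho})$ is precisely the subspace of classes whose restriction to $\Gamma_\ell$ is unramified.

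Next I would pin down the ambient space. Write $T=\{3,\ell\}$ and $M=\operatorname{Ad}^0\overline{\rho}$. Because $K$ is totally real, $K\subset \Q_T$, and the kernel of $\Gamma_S\twoheadrightarrow \Gamma_T$ is topologically generated by complex conjugations and acts trivially on $M$; as $|M|=27$ is odd while that kernel has exponent dividing $2$, the group $\operatorname{Hom}(\Gal(\Q_S/\Q_T),M)$ vanishes, so the five-term exact sequence forces inflation $H^1(\Gamma_T,M)\to H^1(\Gamma_S,M)$ to be an isomorphism, and Lemma \ref{P} yields $\dim H^1(\Gamma_S,M)=2$. Moreover, Hochschild--Serre for $G_K\triangleleft \Gamma_T$ (noting $M^{G_K}=M$ and $H^i(A_4,M)=0$ for $i\geq 1$, since $M^{V_4}=0$ and $|V_4|$ is coprime to $|M|$) identifies
$$H^1(\Gamma_T,M)\;\cong\;\operatorname{Hom}_{\F_3[A_4]}(\Gal(K_T^{(3)}/K),M).$$

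Under this identification the Selmer subspace corresponds to those equivariant homomorphisms that factor through the further quotient $\Gal(K_T^{(3)}/K)\twoheadrightarrow \Gal(K_{\{3\}}^{(3)}/K)$, where $K_{\{3\}}^{(3)}$ denotes the maximal $3$-Frattini extension of $K$ unramified outside primes above $3$. The plan is then to perform a ray class computation in Magma at $\ell=7489$, analogous to the one behind Lemma \ref{P} but with modulus supported only on primes of $K$ above $3$, and to verify the $A_4$-module decomposition
$$\Gal(K_{\{3\}}^{(3)}/K)\;\simeq\;\operatorname{Ad}^0\overline{\rho}\oplus \F_3^{\,a}$$
for some $a\geq 0$. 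Combined with $\operatorname{Hom}_{\F_3[A_4]}(\F_3,M)=0$ and $\operatorname{Hom}_{\F_3[A_4]}(M,M)\cong\F_3$ by Schur's lemma (the latter valid over $\F_3$ because $M$ is an absolutely irreducible $A_4$-module), this forces $\dim H^1_\mathcal{N}(\Gamma_S,M)=1$.

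The main obstacle is verifying, by an explicit Magma calculation at this specific Shanks prime, that exactly one of the two adjoint summands of $\Gal(K_T^{(3)}/K)$ from Lemma \ref{P} persists when one forbids ramification at the primes above $\ell$. The upper bound $\dim H^1_\mathcal{N}\leq 2$ is automatic from Lemma \ref{P}; the nontrivial content is separating the two $A_4$-equivariant adjoint extensions of $K$ inside $K_T^{(3)}$ by their ramification behavior at $\ell$. I do not see a purely abstract cohomological shortcut that avoids the ray class computation, so the direct calculation of $\Gal(K_{\{3\}}^{(3)}/K)$ as an $A_4$-module at $\ell=7489$ appears to be the essential step.
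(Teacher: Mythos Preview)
Your proposal is correct and follows essentially the same route as the paper: both reduce the question to a Magma ray class computation of the $3$-Frattini extension $K_{\{3\}}^{(3)}/K$. The paper is terser---it records only that this extension has $5$ generators and argues by contradiction that $\dim H^1_{\mathcal{N}}\neq 2$ (since two adjoint summands would force $\geq 6$ generators)---whereas you set up the Hochschild--Serre identification explicitly and aim to extract the full $A_4$-module structure; but the underlying computation is the same, and your version has the minor virtue of handling the lower bound $\dim\geq 1$ directly rather than leaving it implicit in the generator count.
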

\begin{proof} Let $T = \{3, \ell\}$.
Clearly,
\[
\begin{tikzcd} 
H^1_\mathcal{N} (\Gamma_S, \operatorname{Ad}^0\overline{\rho}) \arrow[r, hook]  & 
H^1(\Gamma_T, \operatorname{Ad}^0\overline{\rho}).
\end{tikzcd}
\]
By Lemma \ref{P}, it follows that
$$
\dim H^1_\mathcal{N} (\Gamma_S, \operatorname{Ad}^0\overline{\rho}) \leq 2.
$$
Suppose, to reach a contradiction, that
 $
\dim H^1_\mathcal{N} (\Gamma_S, \operatorname{Ad}^0\overline{\rho}) =2
$
so that 
$$
 H^1_\mathcal{N} (\Gamma_S, \operatorname{Ad}^0\overline{\rho})
 \simeq
H^1(\Gamma_T, \operatorname{Ad}^0\overline{\rho}).
$$
As we chose $\mathcal{N}_\ell = H^1_{\operatorname{unr}}(\Gamma_\ell, \Ad)$, observe that any basis for $H^1_\mathcal{N} (\Gamma_S, \operatorname{Ad}^0\overline{\rho})$ consists of two linearly independent cohomology classes $f,g$ which are unramified at $\ell$. 
Let $\Gamma_K = \ker \overline{\rho}$ 
and let $K_f/K$ and $K_g/K$ be the fields fixed by $\ker(f|_{\Gamma_K})$
and $\ker(g|_{\Gamma_K})$, respectively. 
Due to the contributions from $\Gal(K_f/K)$ and $\Gal(K_g/K)$, 
the Galois group of the 3-Frattini extension of $K$ unramified outside \{3\} would have at least $6$ generators as a vector space over $\F_3$.
However a ray class computation shows that it only has 5 generators. 
\end{proof}

\begin{corollary}
Let $\ell=7489$, and let $\mu \geq 0$ be such that
$$
R_{(\mathcal{N}_v)_{v \in S}} \simeq 
\Z_3[[T]]/(3^{\mu}h(T)).
$$
\begin{enumerate}
\item If $\mu=0,$ there exists a finite extension $\mathcal{O}/\Z_3$ of rings and
an even Galois representation
$$
\rho: \Gamma_{S} \to SL(2, \mathcal{O})
$$
lifting $\overline{\rho}$ such that $\rho|_{\Gamma_v} \in \mathcal{C}_v$ for all $v\in S$.
\item If $\mu\geq 1,$ and if there is no characteristic zero lift at the minimal level $S$, then there exist infinitely many auxiliary primes $w$ such that the even deformation ring
$$
R_{(\mathcal{N}_v)_{v \in S\cup\{w\}}} \simeq 
\Z_3[[T]]/(h^{(w)}(T))
$$
is finite and flat over $\Z_3$, and  
there exists a finite extension $\mathcal{O}/\Z_3$ of rings and
an even Galois representation
$$
\rho^{(w)}: \Gamma_{S\cup\{ w\}} \to \operatorname{SL}(2, \mathcal{O})
$$
such that  $\rho \equiv \overline{\rho} \mod 3,$ $\rho$ is ramified at $w$, and  $\rho^{(w)}|_{\Gamma_v} \in \mathcal{C}_v$ for all $v\in S\cup\{w\}$.
\end{enumerate}
\end{corollary}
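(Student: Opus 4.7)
The plan is to reduce both parts to results already established in the paper. First, the global setting for $\ell=7489$ with $S=\{\infty, 3, \ell\}$ is balanced of rank one: balancedness follows exactly as in Lemma \ref{balanced} (the local contributions at $3$ and $\infty$ cancel by Lemma \ref{3andinfty}, and at $\ell$ we have $\dim \mathcal{N}_\ell = \dim H^0(\Gamma_\ell, \operatorname{Ad}^0\overline{\rho})$ by Lemma \ref{localatl}), while Proposition \ref{Selmerone} pins the Selmer rank at one. By Lemma \ref{1gen1rel} and the Weierstrass preparation theorem, one gets $R_{(\mathcal{N}_v)_{v\in S}}\simeq \W(\F_3)[[T]]/(3^\mu h(T)) = \Z_3[[T]]/(3^\mu h(T))$ with $h$ distinguished.

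For part (1), when $\mu=0$ the ring $\Z_3[[T]]/h(T)$ is finite and flat over $\Z_3$ because $h(T)$ is a distinguished polynomial (its quotient is free of rank $\deg h$ over $\Z_3$). Lemma \ref{flatness} then yields a finite ring extension $\mathcal{O}/\Z_3$ and a lift $\rho \colon \Gamma_S \to \operatorname{SL}(2, \mathcal{O})$ with $\rho|_{\Gamma_v}\in \mathcal{C}_v$ for every $v\in S$; evenness is inherited from $\overline{\rho}$ since the sign at $\infty$ is locally determined by the residual representation (which is totally real by construction of $K$).

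For part (2), the hypothesis $\mu\geq 1$ together with the assumption that $R_{(\mathcal{N}_v)_{v\in S}}$ has no characteristic zero point places us precisely in the setting of Theorem \ref{rankone} with $p=3$: $\overline{\rho}$ is irreducible with image containing $\operatorname{SL}(2,\F_3)$ by Proposition \ref{residual}, the global setting is balanced of rank one as noted above, and the ring has no $\overline{\Z}_3$-valued point. Theorem \ref{rankone} then supplies, for any chosen $d\geq 2$, an infinite Chebotarev set $C_d^{(3)}$ of auxiliary primes $w$ such that ramification at $w$ keeps the global setting balanced of rank one and forces $\mu_w = 0$, i.e. $R_{(\mathcal{N}_v)_{v\in S\cup\{w\}}} \simeq \Z_3[[T]]/h^{(w)}(T)$ with $h^{(w)}$ distinguished. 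This ring is finite and flat over $\Z_3$, so Lemma \ref{flatness} produces the desired lift $\rho^{(w)}\colon \Gamma_{S\cup\{w\}}\to \operatorname{SL}(2,\mathcal{O})$, which is special at every place in $S\cup\{w\}$ and, because $\rho^{(w)}|_{\Gamma_w}\in \mathcal{C}_w$ involves the nontrivial unipotent part forced by membership in $C^{(3)}$, is genuinely ramified at $w$. Evenness is again automatic from the residual representation.

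There is no serious obstacle: every ingredient is already in place. The only point to be careful about is verifying that the hypotheses of Theorem \ref{rankone} are met verbatim, in particular the balanced-rank-one condition, which is exactly the content of Proposition \ref{Selmerone} combined with Lemma \ref{balanced}, and the existence of the characteristic-zero lift in the flat case, which is Lemma \ref{flatness}. The only mild subtlety is recording that irreducibility of the lift follows from irreducibility of the residual representation (since the residual image contains $\operatorname{SL}(2,\F_3)$), and that evenness is preserved because complex conjugation lies outside the inertia groups at $3, \ell$ and $w$ and therefore acts through $\overline{\rho}$.
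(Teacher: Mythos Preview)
Your proposal is correct and follows essentially the same approach as the paper, which simply says the corollary follows from Proposition~\ref{Selmerone} and Theorem~\ref{rankone}; you have just unpacked this citation carefully, making explicit the invocation of Lemma~\ref{balanced} for balancedness, Lemma~\ref{flatness} for part~(1), and Theorem~\ref{rankone} for part~(2). One minor remark: your justification that $\rho^{(w)}$ is genuinely ramified at $w$ via the shape of $\mathcal{C}_w$ is not quite right on its own (the class $\mathcal{C}_w$ allows $y=0$), but ramification at $w$ is already part of the conclusion of Theorem~\ref{rankone}, so your citation suffices.
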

\begin{proof}
This follows from Proposition \ref{Selmerone} and Theorem \ref{rankone}.
\end{proof}

In \cite{classification}, conditions on $\ell$ were given for members of the family
$\{ \overline{\rho}^{(\ell)} \}$
to admit lifts $\rho^{(\ell)}$ onto $\operatorname{SL}(2,\Z_3)$. 
The smallest such $\ell$ are 
$\ell = 163, 277, 349, 607,\ldots$
Fix such an $\ell$. For $S=\{3,\ell,\infty\}$, as shown in \cite{classification}, 
$$
H^1_{\mathcal{N}}(\Gal(\Q_S/\Q), \Ad) =0.
$$
In \cite{level}, a condition on primes  $w$ was given such that
$$H^1_{\mathcal{N}}(\Gal(\Q_{S \cup \{w\} }/\Q), \Ad) = 0$$
and such that there exists a \emph{unique} even surjective representation 
$$
\rho^{(\ell, w)}: \Gal(\Q_{S \cup \{w\} }/\Q) \to \operatorname{SL}(2,\Z_3)
$$
with $\rho^{(\ell, w)} \equiv \overline{\rho}^{(\ell)} \bmod 3$
and $\rho^{(\ell,w)} \in \mathcal{C}_v$ for all $v\in S\cup \{w\}$.
On the other hand, if $w$ fails the condition alluded to above, 
then the global setting becomes balanced of rank one: 
$$\dim H^1_{\mathcal{N}}(\Gal(\Q_{S \cup \{w\} }/\Q), \Ad) 
= \dim H^1_{\mathcal{N}^\perp}(\Gal(\Q_{S \cup \{w\} }/\Q), \Ad^*) = 1. $$
For instance, if $\ell=349,$ then the following primes give rise to a balanced global setting of rank one:
$$
w = 19, 193, 271, 331, 367, 373, \ldots
$$
In this case, let $\mu \geq 0$ be such that
$$
R_{(\mathcal{N}_v)_{v \in S \cup \{w\} }} \simeq 
\Z_3[[T]]/(3^{\mu}h^{(w)}(T))
$$
where $h^{(w)}(T)$ is distinguished. 
If $\mu=0,$ there exists a finite extension $\mathcal{O}/\Z_3$ of rings and an even representation
$$
\rho^{}: \Gamma_{S \cup \{ w\} }  \to \operatorname{SL}(2, \mathcal{O})
$$
lifting $\overline{\rho}$ such that $\rho|_{\Gamma_v} \in \mathcal{C}_v$ for all $v\in S \cup \{ w\}$. If $\mu\geq 1,$ and if there is no characteristic zero lift at the minimal level $S$, then there exist infinitely many auxiliary primes $q$ such that, after allowing ramification at $q$, the even deformation ring
$$
R_{(\mathcal{N}_v)_{v \in S\cup\{w, q\}}} \simeq 
\Z_3[[T]]/(h^{(w,q)}(T))
$$
is finite and flat over $\Z_3$ for some distinguished polynomial $h^{(w,q)}(T)$,
and  
there exists a finite extension $\mathcal{O}/\Z_3$ of rings and
an even representation
$$
\rho^{(q)}: \Gamma_{S\cup\{ w,q\}} \to \operatorname{SL}(2, \mathcal{O})
$$
ramified at $q$ with  
$\rho^{(q)} \equiv \overline{\rho} \mod 3$ and $\rho^{(q)}|_{\Gamma_v} \in \mathcal{C}_v$ for all $v\in S\cup\{w,q\}$.

\section{Leopoldt's conjecture and flatness of even deformation rings}

\begin{definition}
    Let $K$ be a number field. If $L$ is a normal extension of $K$ such that $\Gal(L/K) \simeq \Z_p,$ we say that $L/K$ is a $\Z_p$-extension. 
\end{definition}

\begin{remark}[Leopoldt's conjecture] Let $K$ be a number field with $r_1$ real embeddings and $r_2$ pairs of complex embeddings and let $p$ be a prime. 
Let $S$  be the set of primes above $p$ in $K$, and let $K_S^{\operatorname{ab}}$ be the maximal abelian extension of $K$ unramified outside $S$.
Leopoldt's conjecture for $K$ is that 
$$
\dim_{\Q_p} \Gal(K^{\operatorname{ab}}_S/K) \otimes \Q = 1 + r_2, 
$$
so the number of independent $\Z_p$-extensions of $K$ is equal to $1+r_2$.
If $K$ is a field fixed by the kernel of a $2$-dimensional even representation, then $K$ is totally real so Leopoldt's conjecture predicts that there is only one $\Z_p$-extension of $K$ unramified away from $p$.
\end{remark}

\begin{theorem}
Consider the irreducible even representation  
$$\overline{\rho}^{(\ell)}: \Gal(\overline{\Q}/\Q)\to \operatorname{SL}(2,\F_3)$$
unramified outside $S =\{3, \ell, \infty\}$.  
Suppose the global even deformation ring has one generator and one relation: 
$$R_\mathcal{N} = R_{(\mathcal{N}_v)_{v\in S}} \simeq \Z_3[[T]]/(f).$$
Let $\rho_1$ be an even representation onto $\operatorname{SL}(2,\Z/3^2\Z)$
lifting $\overline{\rho}$ and let $K_1$ be the field fixed by the kernel of $\rho_1$.
If Leopoldt's conjecture is true for the field $K_1$, then the global even deformation ring $R_\mathcal{N}$ is flat over $\Z_3$.    
\end{theorem}
\begin{proof}
Let $p=3$. 
If $p$ divides the relation $f,$ then 
the global mod $p$ ring $R_\mathcal{N}/pR_\mathcal{N}$ is a power series ring over $\F_p$ in one variable. 
Let  
$\varrho$ denote the universal deformation for $R_\mathcal{N}/pR_\mathcal{N} \simeq \F_p[[T]]$ 
and let $K_n$ be the field fixed by 
the kernel of $(\varrho \bmod T^{n+1})$ for all $n\geq 1$. 
The fields $K_n$ form an infinite tower of number fields
    $$\cdots K_n/K_{n-1}/\cdots K_2/K_1/K_0 =\Q(\overline{\rho})$$ 
    such that 
    $\Gal(K_n/K_{n-1}) \simeq \operatorname{Ad}^0(\overline{\rho})$ as $\F_p[\Gal(\Q(\overline{\rho})/\Q)]$-modules 
    for all $n\geq 1$. By definition, the condition $\varrho|_{\Gamma_\ell} \in \mathcal{C}_\ell$ implies that 
    the ramification of $\varrho$ at $\ell$ 
    must be exhausted already in $K_1$, leaving $p$ as the only ramified prime in the tower above $K_1$. 
Thus there are at least three independent $\Z_p$-extensions above $K_1$ unramified away from $p$. 
Since $K_1/\Q$ is totally real, this violates Leopoldt's conjecture for $K_1$. 
\end{proof}

\bibliographystyle{amsplain} 
\bibliography{references.bib}

\nocite{*}

\end{document}